\newtheorem{theorem}{Theorem}
\newtheorem{proposition}[theorem]{Proposition}
\newtheorem{lemma}[theorem]{Lemma}
\newtheorem{corollary}[theorem]{Corollary}
\theoremstyle{definition}
\newtheorem{assumption}[theorem]{Assumption}
\newtheorem{remark}[theorem]{Remark}
\newtheorem{example}[theorem]{Example}
\newtheorem{algorithm}[theorem]{Algorithm}
\title{New results for variational regularization with oversmoothing penalty term in Banach spaces}
\author{
Bernd Hofmann\footnotemark[1] \and
Chantal Klinkhammer\footnotemark[2]
\and Robert Plato\footnotemark[2]}
\newcommand{\para}{\alpha}
\newcommand{\parb}{\beta}
\newcommand{\pardel}{\para_*}
\newcommand{\pardelb}{\gamma_\delta}
\newcommand{\ix}{\mathcal{X}}
\newcommand{\iz}{\mathcal{Z}}
\newcommand{\yps}{\mathcal{Y}}
\newcommand{\pp}{p}
\newcommand{\qq}{q}
\newcommand{\fdelta}{f^\delta}
\newcommand{\reza}{ \mathbb{R} }
\newcommand{\Landau}{\mathcal{O}}
\newcommand{\refeq}[1]{(\ref{eq:#1})}
\newcommand{\G}{G}
\newcommand{\norm}[1]{\Vert \hspace{0.4mm} #1 \hspace{0.4mm} \Vert}
\newenvironment{myenumerate}{%
\begin{list}{(\alph{enumcount})}
{\setcounter{enumcount}{1}\usecounter{enumcount}
\setlength{\topsep}{1mm}
\setlength{\itemsep}{0mm}
\setlength{\labelwidth}{0mm}
\setlength{\labelsep}{1mm}
\setlength{\itemindent}{1mm}
\setlength{\leftmargin}{0mm}
}}{\end{list}}
\newcommand{\DF}{\Dset(\F)}
\newcommand{\Dset}{\mathcal{D}}
\newcommand{\myu}{u}
\newcommand{\ust}{u^\dagger}
\newcommand{\fst}{f^\dagger}
\newcommand{\fdel}{f^\delta}
\newcommand{\upardeldel}{u_{\pardel}^\delta}
\newcommand{\upardeldelb}{u_{\pardelb}^\delta}
\newcommand{\upardelst}{u_{\pardel}^\delta}
\newcommand{\tikreg}{Tikhonov regularization\xspace}
\newcommand{\tinybullet}{{\tiny \raisebox{0.6mm}{$ \bullet $}}}
\newenvironment{mylist}{%
\begin{list}{\tinybullet}
{\setlength{\topsep}{0.2cm}
\setlength{\itemsep}{0mm}
\setlength{\labelwidth}{0mm}
\setlength{\labelsep}{3mm}
\setlength{\itemindent}{3mm}
\setlength{\leftmargin}{0mm}
}}{\end{list}}
\newenvironment{mylist_indent}{%
\begin{list}{\tinybullet}
{\setlength{\topsep}{0.2cm}
\setlength{\itemsep}{0mm}
\setlength{\labelwidth}{2mm}
\setlength{\labelsep}{3mm}
\setlength{\itemindent}{0mm}
\setlength{\leftmargin}{5mm}
}}{\end{list}}
\newcommand{\kla}[1]{(#1)}
\newcommand{\mfrac}[2]{\dfrac{\mbox{\footnotesize \raisebox{-0.5mm}{$#1$}}}%
{\mbox{\footnotesize \raisebox{0.8mm}{$#2$}}}}
\newcommand{\proofend}{\qquad \endproof}
\newcommand{\Landauno}[1]{\Landau\kla{#1}}
\newcommand{\as}{\quad \text{as} \ \ }
\newcommand{\R}{\mathcal{R}}
\newcommand{\remarkend}{\quad \ensuremath{\vartriangle}}
\newcommand{\defeq}{:=}
\newcommand{\for}{\quad \text{for} \ \ }
\newcommand{\ints}[4]{\mathop{\raisebox{-0.1mm}{\mbox{\Large$ \textstyle \int $}}}\nolimits_{\hspace{-1mm}#1}^{#2} \hspace{-0.2mm} #3 \, #4}
\newcommand{\rhs}{right-hand side\xspace}
\newenvironment{myenumerate_indent}{%
\begin{list}{(\alph{enumcount})}
{\setcounter{enumcount}{1}\usecounter{enumcount}
\setlength{\topsep}{1mm}
\setlength{\itemsep}{0mm}
\setlength{\labelwidth}{5mm}
\setlength{\labelsep}{2mm}
\setlength{\itemindent}{-0mm}
\setlength{\leftmargin}{7mm}
}}{\end{list}}
\newcommand{\myubar}{\overline{u}}
\newcommand{\inset}[1]{\{ \, #1 \, \}}
\newcommand{\ualpaux}[1][\parb]{\widehat{u}_{#1}}
\newcommand{\ualpauxst}[1][\betast]{\widehat{u}_{#1}}
\newcommand{\fab}{r(a+1)}
\newcommand{\norma}[1]{\norm{#1}_{-a}}
\newcommand{\normone}[1]{\norm{#1}_{1}}
\newcommand{\normmone}[1]{\norm{#1}_{-1}}
\newcommand{\normyps}[1]{\norm{#1}}
\newcommand{\normypsqua}[1]{\norm{#1}^\myr}
\newcommand{\normix}[1]{\norm{#1}}
\newcommand{\landau}{o}
\newcommand{\landauno}[1]{\landau\kla{#1}}
\newcommand{\lfrac}[2]{#1/#2}
\newcommand{\ca}{c_a}
\newcommand{\cb}{C_a}
\newcommand{\upardel}[1][\para]{{u}_{#1}^\delta}
\newcommand{\updd}{\upardel[\pardel]}
\newcommand{\upddb}{\upardel[\pardelb]}
\newcommand{\udel}{\updd}
\newcommand{\tikfun}{T}
\newcommand{\pad}[2][\para]{\tikfun_{#1}^\delta(#2)}
\newcommand{\Jdel}[1]{\norm{\myF{#1} - \fdel}}
\newcommand{\myomega}[1]{\normonequa{#1}}
\newcommand{\F}[1]{\Fpur#1}
\newcommand{\Fpur}{F}
\newcommand{\myb}{b}
\newcommand{\myc}{c}
\newcommand{\parst}{\para_*}
\newcommand{\ubar}{\overline{u}}
\newcommand{\mainassump}{Let Assumption \ref{th:main_assump} be satisfied.\xspace}
\newcommand{\mainassumpb}{Let Assumption \ref{th:main_assump} be satisfied\xspace}
\newcommand{\nondecreasing}{non-decreasing\xspace}
\newcommand{\monotonically}{}
\newcommand{\normonequa}[1]{\norm{#1}_{1}^\myr}
\newcommand{\myr}{r}
\newcommand{\myrr}{\tau}
\newcommand{\Jdelqua}[1]{\norm{F(#1) - \fdel}^\myr}
\newcommand{\cp}{c_p}
\newcommand{\fracpower}{fractional power\xspace}
\newcommand{\fracpowers}{\fracpower{}s\xspace}
\newcommand{\domain}{\mathcal{D}}
\newcommand{\range}{\mathcal{R}}
\newcommand{\mykap}{\kappa}
\newcommand{\betast}{\beta_\delta}
\newcommand{\RG}{\overline{\R(\G)}}
\newcommand{\linf}{L^\infty(0,1)}
\newcommand{\lone}{L^1(0,1)}
\newcommand{\myF}[1]{F(#1)}
\newcommand{\normmax}[1]{\norm{#1}_{\infty}}
\newcommand{\er}{e_r}
\newcommand{\ixtwo}{\overline{\R(G)}}
\newcommand{\postype}{non-negative type\xspace}
\newcommand{\Postype}{Non-negative type\xspace}
\newcommand{\loginv}[2][1]{\kla{\log#2}^{-#1}}
\newcommand{\mya}{a}
\newcommand{\myfa}{\psi}
\newcommand{\mychi}[2]{\chi_{#1,#2}}
\newcommand{\mychiinv}[2]{\chi^{-1}_{#1,#2}}
\newcommand{\mylog}{\log}
\newcommand{\mylogb}[1]{\log \tfrac{1}{#1}}
\newcommand{\myphi}{\varphi}
\newcommand{\myphib}[1]{\kla{\log \tfrac{1}{#1}}^{-1}}
\newcommand{\myphibrez}[1]{\log \tfrac{1}{#1}}
\begin{document}
%%\date{9 March 2023}
%%\date
\date{}
\maketitle

\pgfplotsset{compat = newest}
\pgfplotstableread{2023_03_06_Plot_data_UMIN.dat}{\mytable}
\renewcommand{\thefootnote}{\fnsymbol{footnote}}
\footnotetext[1]{Faculty of Mathematics, Chemnitz University of Technology, 09107 Chemnitz, Germany.}
\footnotetext[2]{Department of Mathematics, University of Siegen,
Walter-Flex-Str.~3, 57068 Siegen, Germany}

\newcounter{enumcount}
\newcounter{enumcountroman}
\renewcommand{\theenumcount}{(\alph{enumcount})}
\bibliographystyle{plain}
\begin{abstract}
In this article on variational regularization for ill-posed nonlinear problems, we are once again discussing the consequences of an oversmoothing penalty term. This means in our model that
the searched-for solution of the considered nonlinear operator equation does not belong to the domain of definition of the penalty functional. In the past years, such variational regularization has been investigated comprehensively in Hilbert scales, but rarely in a Banach space setting. Our
present results try to establish a theoretical justification of oversmoothing regularization in  Banach scales. This new study includes convergence rates results for a priori and a posteriori choices of the regularization parameter, both for H\"older-type smoothness and low order-type smoothness.
An illustrative example is intended to indicate the specificity of occurring non-reflexive Banach spaces.
\end{abstract}

\bigskip

{\parindent0em {\bf Keywords:}
Nonlinear ill-posed problem, variational regularization, oversmoothing penalty, convergence rates results,
a priori parameter choice strategy, discrepancy principle, logarithmic source conditions}

\smallskip

{\parindent0em {\bf AMS subject classifications:} 47J06, 65J20, 47A52}

\section{Introduction}
\label{intro}
The goal of this paper is the theoretical justification of variational regularization with oversmoothing penalties for nonlinear ill-posed problems in Banach scales. Precisely, we consider operator equations of the form
\begin{equation} \label{eq:opeq}
\F(u) = \fst \,,
\end{equation}
where $ \F: \ix \supset \DF \to \yps $ is a nonlinear operator
between infinite-dimensional Banach spaces $ \ix $ and $ \yps $ with norms $\|\cdot\|$.
We suppose that the \rhs $ \fst \in \yps $ is approximately given as $ \fdelta \in \yps$ satisfying the deterministic noise model
\begin{equation} \label{eq:noise}
\normyps{ \fdelta-\fst } \le \delta,
\end{equation}
with the noise level $\delta \ge 0$.
Throughout the paper, it is assumed that the considered equation~\eqref{eq:opeq} has a solution $ \ust \in \DF $ and is, at least at $ \ust$, locally ill-posed in the sense of \cite[Def.~1.1]{HofSch98} and \cite[Def.~3]{HofPla18}.
Consequently, a variant of regularization is required for finding stable approximations to the solution  $ \ust \in \DF $ of equation \eqref{eq:opeq}, and we exploit in this context a variant of variational regularization with regularization parameter $\para>0$, where the regularized solutions $\upardel$ are minimizers of the extremal problem
\begin{equation} \label{eq:TR}
\pad{\myu} \defeq \Jdelqua{\myu} + \para \myomega{\myu-\ubar} \to \min\,, \quad \textup{subject to} \quad  \myu \in \DF,
\end{equation}
with some exponent $ r > 0 $ being fixed. Here, $ \normone{\cdot} $ is a norm on a densely defined subspace $\ix_1$ of $\ix$, which is stronger than the original norm $ \normix{\cdot} $ in $ \ix $.
Precisely, we define the stronger norm $\normone{\cdot}$ by
$ \normone{u} = \norm{G^{-1}u}, u \in \R(G) $, where the generator
$ G: \ix \to \ix $ with range $\R(G)$ is a bounded linear operator, which is one-to-one and has an unbounded inverse $ G^{-1} $. Further conditions on the operator $ G $ are given in Section~\ref{sec:preparations} below.
Moreover, the element $\ubar \in \ix_1 \cap\DF$, occurring in the penalty term of the Tikhonov functional $T_\alpha^\delta$, denotes an initial guess.
Note that we restrict our consideration in this study to identical exponents $r$ for the misfit term and the penalty functional in order to avoid unnecessary technical complications.

In the present work, we discuss the nonlinear Tikhonov-type regularization \eqref{eq:TR} with focus on an oversmoothing penalty term. This means in our model that we have $ \ust \not \in \ix_1 $, or in other words $\|\ust\|_1=+\infty,$ which is an expression of
`non-smoothness' of the solution $\ust$ with respect to the reference Banach space $\ix_1$. Variational regularization of the form \eqref{eq:TR} with $r=2$ and oversmoothing penalty for nonlinear ill-posed operator equations \eqref{eq:opeq} has been investigated comprehensively in the past four years in Hilbert scales, and we refer to \cite{Hofmann_Mathe18,HofPla20} as well as further to the papers \cite{GHH20,HofHof20,HHMP22,HofMat20,KliPla22}. For related results on linear problems, see, e.g.,
\cite{Natterer84}.
%% and more recently \cite{George[08],Nair_Pereverzev_Tautenhahn[05]}.
Our present study continues and extends, along the lines of \cite{HofPla20}, the investigations on nonlinear problems to Banach scales. This new study includes fundamental error estimates
%, cf.\refeq{fin} below, and
yielding convergence and convergence rates results for a priori and a posteriori choices  of the regularization parameter,
both for H\"older-type smoothness and low order smoothness.
The necessary tools for low order smoothness in the Banach space setting are provided.
In addition, a relaxed nonlinearity and smoothing condition on the operator $F$
is considered that turns out to be useful for maximum norms.

Banach space results for the discrepancy principle in a pure equation form have already been proven for the oversmoothing case in the recent paper \cite{CHY21}. In parallel, such results have been developed for oversmoothing subcases to variants of $\ell^1$-regularization and sparsity promoting wavelet regularization in \cite[Sec.~5]{Miller21} and \cite{MillerHohage22}.

The outline of the remainder is as follows: in Section~\ref{sec:preparations} we summarize prerequisites and assumptions for the main results in the sense of error estimates and convergence rates for the regularized solutions. On the one hand, error estimates for a priori choices of the regularization parameter are presented in Section~\ref{sec:apriori_parameter_choices}. On the other hand, Section~\ref{sec:discrepancy_principle} presents results and consequences for using a discrepancy principle. An illustrative example in Section~\ref{sec:example} is intended to indicate the specificity of occurring non-reflexive Banach spaces. A numerical case study will be presented in Section~\ref{sec:study} that illustrates the theoretical results.
Technical details, constructions and verifications for proving the main results of the paper are given in the concluding Section~\ref{sec:verifications}.

%

%%}
%
\section{Prerequisites and assumptions}
\label{sec:preparations}
In this section, we introduce a scale of Banach spaces generated by an operator of positive type. Moreover, we define the logarithm of a positive operator and formulate the basic assumptions for our study.
The concluding subsection is devoted to well-posedness and stability assertions for the variant of variational regularization under consideration in this paper.

\subsection{\Postype operators, fractional powers, and regularization operators}
\label{postype_operators}
Let $ \ix $ with norm $\norm{\cdot}$ be a Banach space and $\mathcal{L}(\ix)$ with norm $\norm{\cdot}_{\scriptscriptstyle \mathcal{L}(\ix)}$ the associated space of bounded linear operators mapping in $\ix$. Furthermore, let the injective operator $ G \in \mathcal{L}(\ix)$ with range $\range(G)$ and unbounded inverse $G^{-1}$ be of \postype, i.e.,
\begin{align}
& G + \parb I: \ix \to \ix \ \textup{one-to-one and onto},
\quad \norm{(G + \parb I)^{-1}}_{\scriptscriptstyle \mathcal{L}(\ix)} \le \frac{\kappa_*}{\parb}, \quad \parb > 0,
\label{eq:postype}
\end{align}
for some finite constant $ \kappa_* > 0 $.
Fractional powers of \postype operators may be defined as follows
\cite{Balakrishnan59,Balakrishnan60}:
\begin{myenumerate_indent}
\item
For $ 0 < \pp < 1 $, the fractional power $ G^\pp : \ix \to \ix $ is defined by
\begin{align}
G^\pp u \defeq \mfrac{\sin \pi \pp }{\pi}
\ints{0}{\infty}{s^{\pp-1} (G + sI)^{-1} G u }{ ds}
\for u \in \ix.
\label{eq:frac-power}
\end{align}
This defines a bounded linear operator on $ \ix $.
%%% which is one-to-one.

\item
For arbitrary values $ \pp \ge 1 $,
the bounded linear operator $  G^\pp : \ix \to \ix $ is defined by
$$
G^\pp  \defeq G^{\pp - \lfloor \pp \rfloor} G^{\lfloor \pp \rfloor}.
$$
We moreover use the notation $ G^0 = I $.
\end{myenumerate_indent}
In what follows, we shall need
the interpolation inequality for \fracpowers of operators, see, e.g.,
\cite{Komatsu66} or
\cite[Proposition 6.6.4]{Haase06}:
for each pair of real numbers \linebreak $ 0 < \pp < \qq $,
there exists some finite constant $ c =c(\pp,\qq) > 0 $ such that
\begin{align}
\norm{ \G^\pp u }
\le c \norm{\G^\qq u}^{\pp/\qq} \norm{ u }^{1-\pp/\qq}
\for u \in \ix.
\label{eq:interpol2}
\end{align}
For $ 0 < \pp < 1 = \qq $, the value of the constant can be chosen as $ c = 2(\kappa_* + 1)$, cf., e.g., \cite[Corollary 1.1.19]{Plato95}.
Under the stated assumptions on $G$, for each $\pp > 0 $, the fractional power
$ G^{\pp} $ is one-to-one, and we use the notation
$ G^{-\pp} = (G^\pp)^{-1} $.
We do not need that the operator $ G $ has dense range in $ \ix $.

The scale of normed spaces $\{\ix_\tau\}_{\tau \in \mathbb{R}}$,
generated by $\G$, is given by the formulas
\begin{align}
& \ix_\tau=\range(\G^\tau) \ \textup{ for }\, \tau > 0, \qquad
\ix _\tau= \ix \ \textup{ for }\, \tau \le 0, \nonumber \\
& \|u\|_\tau:=\|\G^{-\tau} u\| \ \textup{ for }\, \tau \in \mathbb{R}, \ u \in \ix_\tau.
\label{eq:taunorm}
\end{align}
For $ \tau < 0 $, topological completion of the spaces
$\ix_\tau = \ix$ with respect to the norm $ \|\cdot\|_\tau $ is not needed in our setting.
We note that $ (\G^p)_{p\ge 0} $ defines a $ C_0 $-semigroup on
$ \ixtwo $,
which in particular means that $ \G^p \myu \to \myu $ for $ p \downarrow 0 $ is valid for any
$ u \in \overline{\R(G)} $ (cf.~\cite[Proposition 3.1.15]{Haase06}). Finally, we note that
\begin{equation} \label{eq:chain1}
\R(G^{\tau_2}) \subset\R(G^{\tau_1}) \subset \overline{\R(\G)} \quad \mbox{for all} \;0 < \tau_1 < \tau_2 < \infty.
\end{equation}
\subsection{The logarithm $ \mathbf{\log G } $}
For the consideration of low order smoothness, we need to introduce the logarithm of
$ \G $. For selfadjoint operators in Hilbert spaces this can be done by spectral analysis, and we refer in this context for example to \cite{Hohage00}.
%%%,Mahale_Nair[07]}.
In Banach spaces, $ \log G $ may be defined as the infinitesimal generator of the $ C_0 $-semigroup $ (\G^p)_{p \ge 0} $ considered on $ \overline{\R(G)} $:
\begin{align*}
(\log \G) \myu & = \lim_{p \downarrow 0} \tfrac{1}{p}\kla{\G^p \myu-\myu},
\quad \myu  \in \domain(\log \G),
\end{align*}
where
\begin{align*}
\domain(\log\G) & =
\inset{ \myu \in \ix : \lim_{p \downarrow 0} \tfrac{1}{p}\kla{\G^p \myu- \myu}
\ \textup{exists} },
\end{align*}
cf.,~e.g.,~\cite{Nollau69} or \cite[Proposition 3.5.3]{Haase06}.
Low order smoothness of an element $ \myu \in \ix $ by definition then means
$ \myu \in \domain(\log \G) $. Note that
we obviously have $ \domain(\log \G) \subset \RG $. In addition, $ \R(\G^p) \subset \domain(\log \G)$ is valid for arbitrarily small $p>0$,
which follows from \cite[Satz 1]{Nollau69}. Summarizing the above notes, we have a chain of subsets of $\ix$ as
\begin{equation} \label{eq:chain2}
\R(\G^p) \subset \domain(\log \G)  \subset \overline{\R(\G)} \quad \mbox{for all} \;p>0.
\end{equation}
This means that also in the Banach space setting, any H\"older-type smoothness is stronger than low order smoothness.
\subsection{Main assumptions}
In the following assumption, we briefly summarize the structural properties of the space $\ix$, of the operator $F$ and of its domain $\DF$, in particular with respect to the solution $\ust$ of the operator equation. Moreover, we make one more assumption concerning $G$ in addition to
the requirements on the operator $G$ stated above.
%%%\eqref{eq:TR}.
%
\begin{assumption}
\label{th:main_assump}
\begin{myenumerate_indent}

\item
\label{it:spaces}
The infinite-dimensional Banach space $\ix$ has a separable pre-dual space $\iz$ with $\iz^*=\ix$ such that a weak$^*$-convergence denoted as $\rightharpoonup^*$ takes place in $\ix$.

\item The operator $ \F: \ix \supset \DF \to \yps $ is weak$^*$-to-weak sequentially continuous, i.e., for elements $u_n,u_0 \in \DF$, weak$^*$-convergence $u_n \rightharpoonup^*u_0$ in $\ix$ implies weak convergence $F(u_n) \rightharpoonup F(u_0)$ in $\yps$.

\item
The domain of definition $ \DF \subset \ix $ is a sequentially weak$^*$-closed subset of $\ix$.
\item
Let $\Dset \defeq \DF \cap \ix_1 \neq \varnothing $.

\item
Let the solution $ \ust \in \DF $ to equation \eqref{eq:opeq} with \rhs $\fst$ be
an interior point of the domain $ \DF $.

\item
Let the data $ \fdelta \in \yps$ satisfy the noise model \eqref{eq:noise}, and let the
initial guess $\ubar$ satisfy $ \ubar \in \ix_1 \cap \DF$.

\item
\label{it:normequiv_b}
Let $ a > 0 $, and let $ 0 < \ca \le \cb $ and $ c_0, c_1 > 0 $
be finite constants such that the following holds:
\begin{mylist}
\item For each $ u \in \Dset $ satisfying $ \norma{u-\ust} \le c_0 $, we have
\begin{align}
\label{eq:normequiv_a}
\normyps{\myF{\myu} - \fst} \le \cb\norma{\myu - \ust}.
\end{align}

\item
For each $ u \in \Dset $ satisfying $ \normyps{\myF{\myu} - \fst} \le c_1 $, we have
\begin{align}
\label{eq:normequiv_b}
\ca\norma{\myu - \ust} \le \normyps{\myF{\myu} - \fst}.
\end{align}
\end{mylist}

\item
\label{it:Gnew}
The operator $G: \ix \to \ix$ defined above possesses a pre-adjoint operator $\tilde G: \iz \to \iz$ such that $\tilde G^*=G$ holds true.

\end{myenumerate_indent}
\end{assumption}
%
\begin{comment}
\begin{remark} \label{th:rem-assump}
Items \ref{it:G-compact} and
Item \ref{it:G-preadjoint} are needed for the proof our version of well-posedness (existence and stability of regularized solutions in the norm of $ \ix $) in the sense of Theorem \ref{th:tifu-well-posed} below. Note that the preadjoint operator $\tilde G $ is necessarily also compact, see, e.g.,
\cite{Yosida[80]}.
\end{remark}
\end{comment}
%%
\begin{remark} \label{rem:rem1}
From the inequality \eqref{eq:normequiv_b} of item \ref{it:normequiv_b} in Assumption~\ref{th:main_assump}, we have for $\ust \in \ix_1$ that $\ust$ is the uniquely determined solution to equation \eqref{eq:opeq} in the set $\Dset$. For $\ust \notin \ix_1$, there is no solution at all to \eqref{eq:opeq} in $\Dset$. But in both cases, alternative solutions $u^* \notin \ix_1$ with $u^* \in \DF$ and $F(u^*)=\fst$ cannot be excluded in general. Note that the concept of penalty-minimizing solutions, which is usual in theory of Tikhonov regularization, does not make sense in the
case of oversmoothing penalties.
%However, there is an exception if $u^*$ is an interior point of $\DF$. Then a solution  $u^* \in \overline{\R(G)}$ to equation \eqref{eq:opeq} with right-hand side $\fst$ satisfies with $u=u^*$ the inequality \eqref{eq:normequiv_b}. This is a
%consequence of the continuity of $F$ from item \ref{it:conda} of Assumption~\ref{th:main_assump}.
\end{remark}

\subsection{Existence and stability of regularized solutions}

The following two propositions on existence and stability can be immediately taken from \cite[\S~4.1.1]{Schusterbuch12}, and we refer in this context also to \cite{HKPS07} and \cite[\S~3.2]{Scherzetal09}.

\begin{proposition}[well-posedness, cf.~Prop.~4.1 of \cite{Schusterbuch12}]
For all $\alpha>0$ and  $f^\delta \in \yps$, there exists a regularized solution $\upardel \in \mathcal{D}$, minimizing the Tikhonov functional $\pad{\myu}$ in \eqref{eq:TR} over all $u \in \DF$.
\end{proposition}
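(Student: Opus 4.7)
The plan is to apply the direct method of the calculus of variations, adapted to the weak$^*$ topology of $\ix = \iz^*$.

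First, I would take a minimizing sequence $(u_n) \subset \DF$ for $T_\alpha^\delta$. The infimum is finite because $\bar u \in \DF \cap \ix_1$ gives $T_\alpha^\delta(\bar u) < \infty$, and along a minimizing sequence we may assume $T_\alpha^\delta(u_n) \le T_\alpha^\delta(\bar u) + 1$. Since $\alpha > 0$, the penalty term is bounded, so each $u_n$ lies in $\bar u + \ix_1$ and $\|u_n - \bar u\|_1 \le C$. Set $v_n := G^{-1}(u_n - \bar u) \in \ix$. Then $(v_n)$ is bounded in $\ix$, and consequently so is $(u_n)$ (since $\|u_n - \bar u\| \le \|G\|_{\mathcal{L}(\ix)} \|v_n\|$).

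Next, I would extract a weak$^*$-convergent subsequence. By Assumption \ref{th:main_assump}\ref{it:spaces}, $\iz$ is separable and $\iz^* = \ix$, so the Banach--Alaoglu theorem gives sequential weak$^*$-compactness of bounded sets in $\ix$. Passing to a subsequence (not relabeled), $v_n \rightharpoonup^* v$ in $\ix$. By item \ref{it:Gnew}, $G = \tilde G^*$ is the Banach-space adjoint of $\tilde G: \iz \to \iz$, hence $G$ is weak$^*$-to-weak$^*$ sequentially continuous on $\ix$. Therefore $u_n - \bar u = G v_n \rightharpoonup^* G v$, i.e., $u_n \rightharpoonup^* \tilde u := \bar u + G v$. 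Since $\tilde u - \bar u \in \R(G) = \ix_1$ and $\bar u \in \ix_1$, we have $\tilde u \in \ix_1$. The weak$^*$-sequential closedness of $\DF$ (Assumption \ref{th:main_assump}) yields $\tilde u \in \DF$, so $\tilde u \in \Dset$.

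Finally, I would pass to the limit in $T_\alpha^\delta$ using lower semicontinuity. On the misfit side, weak$^*$-to-weak sequential continuity of $F$ gives $F(u_n) \rightharpoonup F(\tilde u)$ in $\yps$, and weak lower semicontinuity of $\|\cdot\|_{\yps}$ yields $\|F(\tilde u) - f^\delta\|^r \le \liminf_n \|F(u_n) - f^\delta\|^r$. On the penalty side, weak$^*$ lower semicontinuity of $\|\cdot\|$ on $\ix = \iz^*$ applied to $v_n \rightharpoonup^* v$ gives $\|\tilde u - \bar u\|_1^r = \|v\|^r \le \liminf_n \|v_n\|^r = \liminf_n \|u_n - \bar u\|_1^r$. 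Adding these inequalities,
\[
T_\alpha^\delta(\tilde u) \le \liminf_n T_\alpha^\delta(u_n) = \inf_{u \in \DF} T_\alpha^\delta(u),
\]
which identifies $\tilde u$ as a minimizer.

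The one non-routine ingredient is establishing that the penalty-bounded sequence has a weak$^*$-convergent subsequence whose limit still realizes the penalty in a lower-semicontinuous way; this is exactly where the pre-adjoint hypothesis $\tilde G^* = G$ in item \ref{it:Gnew} is used, because it transfers the weak$^*$-lower semicontinuity of $\|\cdot\|$ on $\ix$ to the stronger norm $\|\cdot\|_1 = \|G^{-1}\cdot\|$ along weak$^*$-convergent sequences in $\ix$. Without this, one would only get a bound on the limit of the penalty rather than the required lower semicontinuity.
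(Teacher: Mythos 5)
Your proof is correct and in substance coincides with the paper's argument: the paper establishes this proposition by citing the general well-posedness result of Schuster et al.\ and verifying its hypotheses, and the two nontrivial ingredients it singles out --- weak$^*$ sequential compactness of the penalty sublevel sets via the sequential Banach--Alaoglu theorem (using the separable pre-dual of $\ix$), and sequential weak$^*$ lower semicontinuity of the penalty obtained from the pre-adjoint $\tilde G$ --- are exactly the ingredients your direct-method argument rests on. The only difference is presentational: you unroll the cited theorem into a self-contained minimizing-sequence proof and apply Banach--Alaoglu to $v_n = G^{-1}(u_n - \bar u)$ rather than to $u_n$ itself, which is a harmless variant.
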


\begin{proposition}[stability, cf.~Prop.~4.2 of \cite{Schusterbuch12}]
For all $\alpha>0$, the minimizers $\upardel \in \mathcal{D}$ of the extremal problem \eqref{eq:TR} are stable with respect to the data $f^\delta$. More precisely, for a data sequence $\{f_n\}$ converging to $f^\delta$ with respect to the norm topology in $\yps$,
i.e.~$\lim \limits_{n \to \infty}\|f_n-f^\delta\|=0$, every associated sequence $\{u_n\}$ of minimizers to the extremal problem
$$\|F(u)-f_n\|^r + \para \myomega{\myu-\ubar} \to \min\,, \quad \textup{subject to} \quad  \myu \in \DF,  $$
has a subsequence $\{u_{n_k}\}$, which converges in the weak$^*$-topology of $\ix$, and the weak$^*$-limit $\tilde u$ of each such subsequence is a minimizer $\upardel$ of \eqref{eq:TR}.
\end{proposition}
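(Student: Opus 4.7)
The plan is to follow the classical scheme for stability of Tikhonov-type regularization, adapted to the present weak$^*$ framework. First I would derive a uniform penalty bound. Since each $u_n$ minimizes the $n$-th Tikhonov functional and since $\ubar \in \ix_1 \cap \DF$ is admissible with vanishing penalty, comparison gives
$$\alpha \|u_n - \ubar\|_1^r \le \|F(u_n) - f_n\|^r + \alpha \|u_n - \ubar\|_1^r \le \|F(\ubar) - f_n\|^r,$$
and the strong convergence $f_n \to \fdelta$ bounds the right-hand side. Hence the auxiliary sequence $w_n := G^{-1}(u_n - \ubar)$ is bounded in $\ix$, and the Banach--Alaoglu theorem (applicable since $\iz$ is separable with $\iz^* = \ix$) yields a subsequence with $w_{n_k} \rightharpoonup^* w$ in $\ix$.

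Next I would transport this convergence back to $\{u_{n_k}\}$ using the pre-adjoint operator $\tilde G$ from item \ref{it:Gnew} of Assumption~\ref{th:main_assump}: for every $z \in \iz$,
$$\langle z, u_{n_k} - \ubar \rangle = \langle z, G w_{n_k} \rangle = \langle \tilde G z, w_{n_k} \rangle \to \langle \tilde G z, w \rangle = \langle z, G w \rangle,$$
so $u_{n_k} \rightharpoonup^* \tilde u := \ubar + G w$ in $\ix$. Sequential weak$^*$-closedness of $\DF$ forces $\tilde u \in \DF$, and by injectivity of $G$ we have $\tilde u - \ubar = G w \in \ix_1$ with $\|\tilde u - \ubar\|_1 = \|w\|$.

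Finally, I would pass to the liminf in the minimization inequality
$$\|F(u_{n_k}) - f_{n_k}\|^r + \alpha \|u_{n_k} - \ubar\|_1^r \le \|F(u) - f_{n_k}\|^r + \alpha \|u - \ubar\|_1^r, \qquad u \in \DF.$$
Weak$^*$-to-weak sequential continuity of $F$ together with $f_{n_k} \to \fdelta$ gives $F(u_{n_k}) - f_{n_k} \rightharpoonup F(\tilde u) - \fdelta$ in $\yps$, so weak lower semicontinuity of $\|\cdot\|^r$ handles the misfit term; weak$^*$ lower semicontinuity of the dual norm in $\ix$ yields $\|\tilde u - \ubar\|_1 \le \liminf_k \|u_{n_k} - \ubar\|_1$. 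The right-hand side of the displayed inequality converges to $\|F(u) - \fdelta\|^r + \alpha \|u - \ubar\|_1^r$ (with the convention $\|u - \ubar\|_1 = +\infty$ whenever $u - \ubar \notin \ix_1$, in which case the estimate is trivial), so $\tilde u$ minimizes the functional $T_\alpha^\delta$.

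The decisive step, and the one that distinguishes the Banach-space proof from the standard reflexive-space argument, is the transport of weak$^*$-convergence through the generator $G$: the only a priori bound obtained from the minimization lives in the stronger norm $\|\cdot\|_1$, so compactness must be extracted in the auxiliary variable $w_n$ and pushed forward by $G$ to identify the weak$^*$-limit of $u_{n_k}$. This is precisely what the pre-adjoint condition $\tilde G^{\,*} = G$ in item \ref{it:Gnew} is designed to enable; without it, $G$ need not be weak$^*$-to-weak$^*$ continuous and the limit $\tilde u$ could not be identified.
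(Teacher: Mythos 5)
Your proof is correct and rests on exactly the two ingredients the paper itself isolates when justifying this proposition: the sequential Banach--Alaoglu theorem (applicable because $\ix$ has a separable pre-dual) to extract weak$^*$ compactness from the uniform penalty bound, and the pre-adjoint $\tilde G$ from item \ref{it:Gnew} of Assumption~\ref{th:main_assump} to transport weak$^*$ convergence through $G$ and thereby obtain the weak$^*$ lower semicontinuity of the penalty. The difference is only presentational: the paper does not write the minimizing-sequence argument out, but instead invokes Prop.~4.2 of \cite{Schusterbuch12} after verifying its hypotheses (the stabilizing property and the sequential weak$^*$ lower semicontinuity of $\Omega$), which is precisely the content of your compactness step in the auxiliary variable $w_n$ and your liminf step.
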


In order to prove the applicability of both propositions to our situation, we have to state that the relevant items of Assumptions 3.11 and 3.22 in \cite{Schusterbuch12} can be met as a consequence of our Assumption~\ref{th:main_assump} by taking into account Remark 4.9 in \cite{Schusterbuch12}, where the transfer from the weak-situation to the weak$^*$-situation is explained. In particular, since $G$ is bounded, we have $\|u\|_1 \ge \tilde c \|u\|$ with some constant $\tilde c>0$ for all $u \in \ix_1$.
Then the penalty functional $\Omega: \ix \to [0,\infty]$ of the Tikhonov functional $T_\alpha^\delta$,  defined as
$$ \Omega(u):= \left\{\begin{array}{lcl} \|u-\ubar\|_1^r=\|G^{-1}(u-\ubar )\|^r & \mbox{for} & u \in \ix_1\\ +\infty & \mbox{for} & u \in \ix \setminus \ix_1\end{array} \right.\,,$$
possesses the required stabilizing property (cf.~item (c) of Assumption 3.22 in \cite{Schusterbuch12}) as a consequence of the sequential Banach--Alaoglu theorem, which implies that the sublevel sets $$\{u \in \ix: \Omega(u) \le c\} \subset \{u \in \ix: \|u-\ubar \|^r \le \frac{c}{\tilde c^r}\} $$ are weak$^*$ sequentially compact in $\ix$ for all $c \ge 0$.
Moreover, $\Omega$ is sequentially weak$^*$ lower semi-continuous (cf.~item (b) of Assumption 3.22 in \cite{Schusterbuch12}), because the existence of a pre-adjoint operator $\tilde G: \iz \to \iz$ to $G$ in the sense of item \ref{it:Gnew} of Assumption~\ref{th:main_assump} ensures that
$G: \ix \to \ix$ is weak$^*$-to-weak$^*$ sequentially continuous. This together with the weak$^*$ lower semi-continuity of the norm functional in $\ix$ yields the sequentially weak$^*$ lower semi-continuity of the penalty functional $\Omega$.
%
%
%
%%%\begin{remark} \label{rem:alter}
%%%The weak$^*$ lower semi-continuity of $\Omega$ is also valid if we replace the explicit requirement of item \ref{it:Gnew} of Assumption~\ref{th:main_assump} on the operator $G$ by an implicit one, namely extending item \ref{it:spaces} of Assumption~\ref{th:main_assump} by
%%%supposing that additionally the Banach space $\ix_1$ has also a separable pre-dual space $\iz_1$ with $\iz_1^*=\ix_1$. Then the Banach-Alaoglu theorem  also applies to $\ix_1$, and  sequences $\{u_n\} \subset \ix_1$ bounded in $\ix_1$ with weak$^*$-limit $\tilde u$ in $\ix$
%%possess subsequences $\{u_{n_k}\}$ convergent in $\ix_1$, where $\tilde u$ is again the limit element, which belongs to $\ix_1$. Here, the weak$^*$ lower semi-continuity of the norm functional in $\ix_1$ yields the sequentially weak$^*$ lower semi-continuity of the penalty functional $\Omega$.
%%\end{remark}
%
\section{Error estimate and a priori parameter choices}
\label{sec:apriori_parameter_choices}
We start with an error estimate result that provides the basis for the analysis of the regularizing properties, including convergence rates under a priori parameter choices. In what follows, we use the notation
\begin{align}
\mykap \defeq \frac{1}{\fab}.
\label{eq:mykapdef}
\end{align}
\begin{theorem}
\label{th:upardel-esti}
\mainassump
Then there exist finite positive constants $ K_1, \para_0 $ and $ \delta_0 $ such that for $ 0 < \para \le \para_0 $ and $ 0 < \delta \le \delta_0 $,
an error estimate for the regularized solutions as
\begin{align}
\label{eq:fin}
\norm{\upardel -\ust} \le f_1(\para) + K_1 \frac{\delta}{\para^{\mykap a}}
\end{align}
holds, where $f_1(\para)$ for $ 0 < \para \le \para_0 $ is some bounded function satisfying:
\begin{mylist_indent}
\item (No explicit smoothness)
If $ \ust \in \overline{\R(G)} $, then $ f_1(\para) \to 0 $ as $ \para \to 0 $.
\item (H\"older smoothness)
If $ \ust \in \ix_\pp $ for some $ 0 < \pp \le 1 $, then
$ f_1(\para) = \Landauno{\para^{\mykap p}} $ as $ \para \to 0 $.
\item (Low order smoothness)
If $ \ust \in \domain(\log \G) $, then
$ f_1(\para) = \Landauno{\loginv{\frac{1}{\para}}} $ as $ \para \to 0 $.
\end{mylist_indent}
\end{theorem}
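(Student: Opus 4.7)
The strategy is the classical auxiliary-element technique for oversmoothing variational regularization, adapted from the Hilbert-scale treatment in \cite{HofPla20} to the present Banach-scale setting. The main obstacle is the absence of spectral calculus, so all smoothing of $\ust$ must be accessed through the resolvent $(G+\beta I)^{-1}$ and glued together via the interpolation inequality \eqref{eq:interpol2}.

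I would first introduce the auxiliary family $w_\beta \defeq G(G+\beta I)^{-1}\ust = \ust - \beta(G+\beta I)^{-1}\ust$ for $\beta>0$. By construction $w_\beta \in \R(G) = \ix_1$, and for $\beta$ sufficiently small $w_\beta \in \Dset$, because $\ust$ is an interior point of $\DF$ and $w_\beta \to \ust$ in $\ix$ whenever $\ust \in \overline{\R(G)}$. Using the \postype bound \eqref{eq:postype} and Komatsu/Balakrishnan-type moment estimates for fractional powers, I would record the decay of the smoothing residual $r_\beta \defeq \ust - w_\beta = \beta(G+\beta I)^{-1}\ust$ in each of the three smoothness cases: purely qualitative convergence $\|r_\beta\|,\,\norma{r_\beta} \to 0$ for $\ust \in \overline{\R(G)}$; polynomial rates $\|r_\beta\| = O(\beta^p)$, $\norma{r_\beta} = O(\beta^{p+a})$ with simultaneous growth $\normone{w_\beta} = O(\beta^{p-1})$ under $\ust \in \ix_p$; and analogous logarithmic rates under $\ust \in \domain(\log G)$, obtained from the generator representation of $\log G$ on $\overline{\R(G)}$ combined with the type of semigroup arguments used in \cite{Nollau69}. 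This last, logarithmic, case is where I expect the main technical effort, since the usual Hilbert-space spectral argument must be replaced by a purely semigroup-theoretic Banach-space computation.

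The remainder of the proof is a deterministic assembly. Inserting $w_\beta$ into the minimality inequality $\pad{\upardel} \le \pad{w_\beta}$ and invoking \eqref{eq:noise} together with the forward estimate \eqref{eq:normequiv_a} applied at $w_\beta$, one isolates the two summands of the Tikhonov functional separately to obtain
\[
\|F(\upardel)-f^\delta\| \,\le\, c\,(\norma{r_\beta} + \delta + \alpha^{1/r}\normone{w_\beta-\ubar}), \qquad \normone{\upardel-\ubar} \,\le\, c\,(\alpha^{-1/r}(\norma{r_\beta}+\delta) + \normone{w_\beta-\ubar}).
\]
The reverse estimate \eqref{eq:normequiv_b}, applicable once $\alpha$ and $\delta$ are so small that $\upardel$ lies in its range of validity, converts the first bound into a control of $\norma{\upardel-\ust}$, and the triangle inequality produces the corresponding bounds for $\norma{\upardel-w_\beta}$ and $\normone{\upardel-w_\beta}$.

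Finally I would invoke \eqref{eq:interpol2}, applied after substituting $v = G(\upardel-w_\beta) \in \R(G)$, to interpolate $\|\upardel-w_\beta\|$ between $\norma{\upardel-w_\beta}$ and $\normone{\upardel-w_\beta}$, obtaining an inequality of the form
\[
\|\upardel-w_\beta\| \,\le\, c\,\norma{\upardel-w_\beta}^{\,1/(1+a)}\,\normone{\upardel-w_\beta}^{\,a/(1+a)},
\]
and combine it with $\|\upardel-\ust\| \le \|\upardel-w_\beta\|+\|r_\beta\|$. Splitting the resulting estimate into a part linear in $\delta$ and a part that is $\beta$-dependent only, and rewriting the arising interpolation exponents in terms of $\mykap$, produces the noise contribution $K_1\delta/\alpha^{\mykap a}$; the remaining $\beta$-dependent pieces define $f_1(\alpha)$ once $\beta$ is coupled to $\alpha$ by balancing $\alpha^{1/r}\normone{w_\beta-\ubar}$ against $\norma{r_\beta}$. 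Plugging the three smoothness-case bounds from the first step into this balance yields $f_1(\alpha)\to 0$ for $\ust\in\overline{\R(G)}$, $f_1(\alpha)=O(\alpha^{\mykap p})$ under the H\"older assumption, and $f_1(\alpha)=O(\loginv{1/\alpha})$ under the logarithmic assumption, completing the proof.
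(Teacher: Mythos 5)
Your overall route is the same one the paper's machinery is built on (the paper itself defers the proof of Theorem~\ref{th:upardel-esti} to the reference [PlatoIndien22], but Section~\ref{sec:verifications} sets up exactly this argument: auxiliary elements, the minimality inequality split into the two summands of $\pad{\cdot}$, the two-sided nonlinearity conditions \eqref{eq:normequiv_a}--\eqref{eq:normequiv_b}, the interpolation inequality \eqref{eq:interpol2} between $\norma{\cdot}$ and $\normone{\cdot}$, and the coupling $\beta=\para^{\mykap}$). The assembly in your second and third steps is sound and reproduces \eqref{eq:fin}.

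There is, however, one genuine gap: your auxiliary element $w_\beta = G(G+\beta I)^{-1}\ust$ uses the plain resolvent, whose qualification (saturation index) is $p_0=1$, and this is not enough. The argument needs $\norma{\ust-w_\beta}=\|G^a(\ust-w_\beta)\|=O(\beta^{p+a})$, i.e.\ $\|S_\beta G^{p+a}w\|=O(\beta^{p+a})$, which requires a companion operator $S_\beta$ with saturation $p_0\ge p+a$, hence $p_0\ge 1+a$ to cover all $0<p\le 1$. For $S_\beta=\beta(G+\beta I)^{-1}$ one only gets $\|S_\beta G^{q}\|_{\scriptscriptstyle\mathcal{L}(\ix)}=O(\beta^{\min(q,1)})$, so your claimed bound $\norma{r_\beta}=O(\beta^{p+a})$ fails whenever $p+a>1$; worse, in the no-explicit-smoothness case one needs $\beta^{-a}\|S_\beta G^{a}(\ust-\ubar)\|\to 0$, which already breaks down for $a\ge 1$ — precisely the value $a=1$ realized in the paper's example of Section~\ref{sec:example}. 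This is why the paper defines the auxiliary elements \eqref{eq:uaux-def} through a general family $R_\beta$ subject to \eqref{eq:wachstum}--\eqref{eq:commute} with saturation $p_0\ge 1+a$, instantiated by the $m$-times iterated Lavrentiev method $S_\beta=\beta^m(G+\beta I)^{-m}$ with integer $m\ge 1+a$ (combined with the interpolation inequality for non-integer exponents). Replacing your single resolvent by such a family — and, as a minor point, centering the auxiliary element at $\ubar$ as in \eqref{eq:uaux-def} rather than at $0$ — repairs the proof; the rest of your argument, including the semigroup-theoretic treatment of $\log G$ for the low order case, then goes through as in Lemmas~\ref{th:low-order-rate}, \ref{th:auxel_1} and \ref{th:auxel_2}.
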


Theorem \ref{th:upardel-esti}, the proof of which can be found in \cite{PlatoIndien22}, allows us to derive regularizing properties
of variational regularization with oversmoothing penalty and to obtain convergence and rates results for appropriate a priori parameter choices that
culminate in Theorem~\ref{th:apriori}.
For evaluating the strength of smoothness for the three different occurring situations in Theorem~\ref{th:upardel-esti}
(no explicit smoothness, H\"older smoothness and low order smoothness) we recall the chain \eqref{eq:chain2} of range conditions.

The following theorem is a direct consequence of Theorem~\ref{th:upardel-esti}, because its proof is immediately based on the error estimate \eqref{eq:fin} with the respective properties
of the function $f_1(\para)$.

\begin{theorem}
\label{th:apriori}
\mainassump
\begin{mylist}
\item (No explicit smoothness)
Let $ \ust \in \overline{\R(G)} $. Then for any a priori parameter choice
$\pardel=\para(\delta)$
satisfying
$ \pardel \to 0 $ and $ \tfrac{\delta}{\pardel^{\mykap a}} \to 0 $ as $ \delta \to 0 $,
we have
$$ \normix{u_{\pardel}^\delta-\ust} \to 0 \quad \textup{ as } \delta \to 0.
$$

\item (H\"older smoothness)
Let $ \ust \in \ix_p $ for some $ 0 < p \le 1 $.
Then for any a priori parameter choice satisfying
$ \para_*=\para(\delta) \sim  \delta^{1/(\kappa(p+a))} $,
we have
\begin{align*}
\normix{\udel -\ust} = \Landauno{\delta^{p/(p+a)}} \as \delta \to 0.
\end{align*}
\item (Low order smoothness)
Let $ \ust \in \domain(\log G) $.
Then for any a priori parameter choice satisfying
$ \para_*=\para(\delta) \sim \delta $, we have
\begin{align*}
\normix{\udel -\ust} = \Landauno{\loginv{\tfrac{1}{\delta}}} \as \delta \to 0.
\end{align*}
\end{mylist}
\end{theorem}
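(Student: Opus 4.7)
My plan is to derive Theorem~\ref{th:apriori} directly from Theorem~\ref{th:upardel-esti}. For each of the three smoothness regimes I substitute the prescribed a priori choice $\pardel = \para(\delta)$ into the error bound \eqref{eq:fin} and check that the bias term $f_1(\pardel)$ and the noise-propagation term $K_1\delta/\pardel^{\mykap a}$ either both vanish (case (a)) or both decay at the asserted rate (cases (b) and (c)). Since Theorem~\ref{th:upardel-esti} already carries all of the real regularization-theoretic content, what remains is essentially a balancing computation; I only need to observe that the hypotheses $0 < \pardel \le \para_0$ and $0 < \delta \le \delta_0$ are satisfied for all sufficiently small $\delta$ under every parameter choice considered.

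For case (a) the first alternative of Theorem~\ref{th:upardel-esti} gives $f_1(\para) \to 0$ as $\para \to 0$ whenever $\ust \in \overline{\R(G)}$. The two conditions imposed on the parameter choice, namely $\pardel \to 0$ and $\delta/\pardel^{\mykap a} \to 0$, then force each summand in \eqref{eq:fin} to zero separately, which is exactly the asserted norm convergence.

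For case (b) I invoke $f_1(\para) = \Landauno{\para^{\mykap p}}$. With $\pardel \sim \delta^{1/(\mykap(p+a))}$, a short exponent calculation gives both $f_1(\pardel) = \Landauno{\delta^{p/(p+a)}}$ (since $\mykap p/(\mykap(p+a)) = p/(p+a)$) and $\delta/\pardel^{\mykap a} = \Landauno{\delta^{1 - a/(p+a)}} = \Landauno{\delta^{p/(p+a)}}$. The two terms balance, and this balance is precisely what motivates the specific exponent appearing in $\pardel$. For case (c) I take $\pardel \sim \delta$, so that the low-order smoothness alternative of Theorem~\ref{th:upardel-esti} directly yields $f_1(\pardel) = \Landauno{\loginv{1/\delta}}$, while the noise-propagation term becomes $\delta/\pardel^{\mykap a} \sim \delta^{1 - \mykap a}$, polynomial in $\delta$ and hence $o(\loginv{1/\delta})$ as $\delta \to 0$.

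The only point requiring genuine care is this last estimate in case (c): it rests on the structural inequality $\mykap a < 1$, so that the propagated noise term is truly negligible against the logarithmic bias. This is implicit in the definition $\mykap = 1/\fab$ and the framework of Theorem~\ref{th:upardel-esti}, rather than a separate hypothesis to be verified. Beyond this consistency check I expect no real obstacle; the theorem is, as the text already suggests, a direct corollary of the preceding error estimate.
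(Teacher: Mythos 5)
Your proof is, in substance, the paper's own: the authors present Theorem~\ref{th:apriori} as a direct consequence of the error estimate \eqref{eq:fin} with no further detail, and your substitutions for cases (a) and (b) carry this out correctly (the exponent balance $\mykap p/(\mykap(p+a))=p/(p+a)$ and $1-a/(p+a)=p/(p+a)$ is exactly what is intended).

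There is, however, a genuine gap in case (c), at precisely the point you flag and then wave away. You need $\mykap a<1$ so that $\delta/\para_*^{\mykap a}\sim\delta^{1-\mykap a}$ is negligible against $\loginv{\tfrac{1}{\delta}}$, and you assert this is ``implicit in the definition of $\mykap$.'' It is not: from the identity $\mykap a-\tfrac{1}{r}=-\mykap$ (used in the proof of Corollary~\ref{th:upardel_cor_b}) one has $\mykap=\tfrac{1}{r(1+a)}$, so $\mykap a<1$ is equivalent to $r>\tfrac{a}{1+a}$. This holds automatically for $r\ge 1$ (in particular for the classical case $r=2$ and the numerical experiments with $r=1$), but the framework admits every exponent $r>0$, and for $r\le\tfrac{a}{1+a}$ the choice $\para_*\sim\delta$ leaves the propagated-noise term $\delta^{1-\mykap a}$ bounded away from zero or even divergent, so the logarithmic rate cannot be read off from \eqref{eq:fin}. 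To close the gap you should either record $\mykap a<1$ (equivalently $r>\tfrac{a}{1+a}$) as an explicit hypothesis for case (c), or replace the parameter choice by $\para_*\sim\bigl(\delta\log\tfrac{1}{\delta}\bigr)^{1/(\mykap a)}$, which is the choice consistent with $\beta_*$ in Lemma~\ref{th:auxel_2} (recall $\beta=\para^{\mykap}$) and for which both $f_1(\para_*)$ and $K_1\delta/\para_*^{\mykap a}$ are $\Landauno{\loginv{\tfrac{1}{\delta}}}$ for every $r>0$. Apart from this point, the argument is the intended one and cases (a) and (b) need no changes.
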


\section{Results for a discrepancy principle and consequences}
\label{sec:discrepancy_principle}
%
%%We start with the presentation of the discrepancy principle.
%
%%We start with an error estimate result that provides the basis for the analysis of the regularizing properties, including convergence rates under a priori parameter choices.
%
%
%
For the specification of a suitable discrepancy principle,
the behaviour of the misfit functional $ \para \mapsto \Jdel{\upardel} $ needs to be understood, for $ \delta > 0 $ fixed.
The basic properties are summarized in the following proposition.
As a preparation, we introduce the following parameter:
%% $\er $ is defined as follows:
%
\begin{align}
\er = \left\{\begin{array}{ll} 1,
& \textup{if } r \ge 1, \\
2^{-1+1/r}& \textup{otherwise}.
\end{array}\right.
\label{eq:er}
\end{align}
\begin{proposition}
\label{th:misfit-behavior}
\mainassump
Then for $ \delta > 0 $ fixed, the function \linebreak $ \para \mapsto \Jdel{\upardel} $ is \monotonically \nondecreasing, with
\begin{align*}
\lim_{\para \to 0}  \Jdel{\upardel} \le \er \delta,
\quad \lim_{\para \to \infty}  \Jdel{\upardel} =  \Jdel{\ubar}.
%%%\label{eq:misfit-behavior}
\end{align*}
In addition, we have $ \lim_{\para \to \infty} \normix{\upardel - \ubar} = 0 $.
\end{proposition}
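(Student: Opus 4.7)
For monotonicity I fix $0<\alpha_1<\alpha_2$, write the defining inequality $T_{\alpha_i}^\delta(u_{\alpha_i}^\delta)\le T_{\alpha_i}^\delta(u_{\alpha_j}^\delta)$ for $\{i,j\}=\{1,2\}$, and add the two inequalities: the misfit terms cancel and one obtains $\|u_{\alpha_2}^\delta-\ubar\|_1\le \|u_{\alpha_1}^\delta-\ubar\|_1$. Re-inserting this into the $\alpha_1$-inequality then forces $\alpha\mapsto \|F(\upardel)-\fdelta\|$ to be nondecreasing, so both one-sided limits in $\alpha$ exist automatically.

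For $\alpha\to\infty$, and for the additional claim $\|\upardel-\ubar\|\to 0$, I take $\ubar$ itself as test element: the Tikhonov inequality collapses to $\|F(\upardel)-\fdelta\|^r+\alpha\|\upardel-\ubar\|_1^r\le \|F(\ubar)-\fdelta\|^r$ since the penalty vanishes at $\ubar$. The second summand on the left forces $\|\upardel-\ubar\|_1\to 0$, hence $\|\upardel-\ubar\|\to 0$ because the subscript-$1$ norm dominates the ambient norm. Strong, and hence weak$^*$, convergence $\upardel\to\ubar$ in $\ix$, combined with weak$^*$-to-weak sequential continuity of $F$, gives $F(\upardel)\rightharpoonup F(\ubar)$ in $\yps$; weak lower semicontinuity of the norm then matches the upper bound $\|F(\upardel)-\fdelta\|\le \|F(\ubar)-\fdelta\|$ to identify the limit as $\|F(\ubar)-\fdelta\|$.

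For $\alpha\to 0$ the Tikhonov inequality applied to any test element $\tilde u\in \ix_1\cap\mathcal{D}(F)$ gives $\|F(\upardel)-\fdelta\|^r\le \|F(\tilde u)-\fdelta\|^r+\alpha\|\tilde u-\ubar\|_1^r$, so passing to the limit bounds the limiting misfit by $\|F(\tilde u)-\fdelta\|$. The substantive task is to produce a sequence $\{\tilde u_n\}\subset \ix_1\cap\mathcal{D}(F)$ along which $\|F(\tilde u_n)-\fst\|\to 0$. A natural candidate is a Yosida-type smoothing $\tilde u_n=\ubar+G(G+\varepsilon_n I)^{-1}(\ust-\ubar)$: the factor $G$ places $\tilde u_n-\ubar$ into $\mathrm{range}(G)=\ix_1$, the interior-point hypothesis on $\ust$ guarantees $\tilde u_n\in\mathcal{D}(F)$ for $n$ large, the positive-type estimate on $G$ yields convergence $\tilde u_n\to\ust$ in a sufficiently strong scale-norm, and the nonlinearity bound in Assumption~\ref{th:main_assump} then promotes this to $\|F(\tilde u_n)-\fst\|\to 0$. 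Combining the triangle inequality $\|F(\tilde u_n)-\fdelta\|\le \|F(\tilde u_n)-\fst\|+\delta$ with the elementary scalar estimate $(a^r+b^r)^{1/r}\le e_r(a+b)$, which unifies the convex case $r\ge 1$ and the concave case $r<1$, then delivers the bound $e_r\delta$.

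The main obstacle is precisely this last step: because the oversmoothing condition $\ust\notin \ix_1$ rules out the naive choice $\tilde u=\ust$, admissible approximants must be manufactured from the positive-type structure of $G$ in such a way that they simultaneously lie in $\ix_1$, stay in $\mathcal{D}(F)$, and converge to $\ust$ in the scale norm $\|\cdot\|_a$ for which Assumption~\ref{th:main_assump} provides the Lipschitz-type control needed to transport convergence to the $F$-images. Everything else amounts to routine Tikhonov-comparison bookkeeping.
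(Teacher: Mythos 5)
Your argument is correct in outline and is, in substance, the proof that the paper itself delegates to \cite[Proposition~4.5]{HofPla20}: minimizer comparison for the monotonicity, testing with $\ubar$ for $\para\to\infty$, and testing with smooth admissible approximants for $\para\to 0$. Your Yosida-type candidates $\tilde u_n=\ubar+G(G+\varepsilon_n I)^{-1}(\ust-\ubar)$ are precisely the paper's auxiliary elements \eqref{eq:uaux-def} for Lavrentiev regularization, $R_\beta=(G+\beta I)^{-1}$, so the properties you need are exactly those recorded in Lemma~\ref{th:auxel_1} rather than something to be re-derived. Two steps should be tightened. First, the convergence $\normyps{F(\tilde u_n)-\fst}\to 0$ follows from \eqref{eq:normequiv_a} once $\norma{\tilde u_n-\ust}=\norm{S_{\varepsilon_n}G^{a}(\ust-\ubar)}\to 0$; without a H\"older source condition this is \emph{not} a consequence of the positive-type estimate alone (which only yields a rate on $\R(G^{p})$), but of the uniform boundedness of the companion operators together with $G^{a}(\ust-\ubar)\in\overline{\R(G)}$, i.e.\ the density argument used in the proof of Lemma~\ref{th:auxel_1}. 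Second, your appeal to the interior-point hypothesis to conclude $\tilde u_n\in\DF$ requires $\tilde u_n\to\ust$ in the norm of $\ix$, hence $\ust\in\overline{\R(G)}$; since $G$ is not assumed to have dense range, this is a genuine restriction, although it is the same baseline the paper adopts (the proof of Lemma~\ref{th:upardel_lemma} likewise opens with ``Let $\ust\in\overline{\R(G)}$''). Finally, the detour through $(a^r+b^r)^{1/r}\le\er(a+b)$ is unnecessary here: letting $\para\to 0$ in $\normyps{F(\upardel)-\fdelta}^r\le\normyps{F(\tilde u_n)-\fdelta}^r+\para\normone{\tilde u_n-\ubar}^r$ and then $n\to\infty$ already gives the sharper bound $\delta$, which suffices since $\er\ge 1$.
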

\proof This follows along the lines of the proof of \cite[Proposition 4.5]{HofPla20}.
Details are thus omitted here.
\proofend
\begin{algorithm}[Discrepancy principle]
\label{th:discrepancy-def}
Let $ \myb > \er $ and $ \myc > 1 $ be finite constants.
\begin{myenumerate}
\item
If $ \Jdel{\ubar} \le \myb \delta $ holds,
then choose $ \pardel = \infty $, i.e., $ \upardel[\infty] \defeq \ubar \in \Dset $.

\item
Otherwise, choose a finite parameter $ \para =: \pardel > 0 $ such that
\begin{align}
\Jdel{\updd} \le \myb \delta \le \Jdel{\upddb}
\quad \textup{for some } \pardel \le \pardelb \le \myc \pardel,
\label{eq:parameter-choice}
\end{align}
where $ c > 1 $ denotes some finite constant.
\end{myenumerate}
\end{algorithm}%
\begin{remark}
%%\label{th:sdp}
\begin{myenumerate}
\item
Practically, a parameter $ \pardel $ satisfying condition \refeq{parameter-choice} can be determined, e.g., by a sequential discrepancy principle.
For more details, see, e.g.,
Section \ref{sec:study} and \cite{HofPla20}.

\item
It follows from Proposition \ref{th:misfit-behavior} that Algorithm
\ref{th:discrepancy-def} is feasible. We note that for $ \delta > 0 $ fixed, the function $ \para \mapsto \Jdel{\upardel} $ may be discontinuous. For this reason, we do not consider other versions of the discrepancy principle, e.g.,
$ \Jdel{\updd} = \myb \delta $ or
$ \myb_1 \delta \le \Jdel{\updd} \le \myb_2 \delta $.
\end{myenumerate}
\end{remark}

%%\noindent
We next present the main result of this paper.
%%The regularizing properties of the discrepancy principle are as follows.
%
\begin{theorem}
\label{th:discrepancy_main_result}
\mainassumpb, and let the regularization parameter $\para_*=\para(\delta,\fdelta)$ be chosen according to the discrepancy principle.
\begin{mylist}
\item (No explicit smoothness)
If $ \ust \in \overline{\R(G)} $, then
%%% for any a priori parameter choice
%%$\pardel=\para(\delta)$
%%%satisfying
%
we have
$$
\normix{u_{\pardel}^\delta-\ust} \to 0
\as \delta \to 0.
%%%\quad
%%\tfrac{\delta}{\pardel^{\mykap a}} \to 0 \as \delta \to 0.
$$

\item (H\"older smoothness)
If $ \ust \in \ix_p $ for some $ 0 < p \le 1 $,
then
%%for any a priori parameter choice satisfying
%%$ \para_*=\para(\delta) \sim  \delta^{1/(\kappa(p+a))} $
we have
\begin{align*}
\normix{\udel -\ust} = \Landauno{\delta^{p/(p+a)}}
%%%\qquad \para_*^{-1} = \Landauno{\delta^{-1/(\kappa(p+a))}}
\as \delta \to 0.
\end{align*}
\item (Low order smoothness)
If $ \ust \in \domain(\log G) $,
then
\begin{align*}
\normix{\udel -\ust} = \Landauno{\loginv{\tfrac{1}{\delta}}} \as \delta \to 0.
\end{align*}%%
\end{mylist}%%
\end{theorem}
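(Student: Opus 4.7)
The strategy is to apply Theorem~\ref{th:upardel-esti} at $\para = \pardel$ and then to establish size bounds on the discrepancy-chosen $\pardel$ which mirror the a priori choice of Theorem~\ref{th:apriori}. Applicability of Theorem~\ref{th:upardel-esti} first requires $\pardel \le \para_0$, which I would check as follows: the upper part of Algorithm~\ref{th:discrepancy-def}, namely $\|F(\updd) - \fdelta\| \le \myb\delta$, combined with \eqref{eq:normequiv_b} and the noise model~\eqref{eq:noise}, yields
\begin{align*}
\|\updd - \ust\|_{-a} \le \frac{\myb + 1}{\ca}\,\delta,
\end{align*}
while Proposition~\ref{th:misfit-behavior} forces $\updd \to \ubar$ as $\pardel \to \infty$; hence $\pardel$ must stay bounded and in fact $\pardel \to 0$ as $\delta \to 0$.

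For the two quantitative cases I would construct, for each $\rho > 0$, a smooth auxiliary element $w_\rho \in \ix_1 \cap \DF$ via the resolvent $(\rho I + G)^{-1}$ and the $C_0$-semigroup $(G^p)_{p \ge 0}$ introduced in Subsection~\ref{postype_operators}: for $\ust \in \ix_p$ this delivers $\|w_\rho - \ust\|_{-a} = O(\rho^{p+a})$ together with $\|w_\rho - \ubar\|_1 = O(\rho^{\,p-1})$, with a logarithmic counterpart when $\ust \in \domain(\log G)$. Testing the minimality of $\upddb$ against $w_\rho$ with $\rho \sim \delta^{1/(p+a)}$ (resp.\ its log analogue) and invoking the lower part of Algorithm~\ref{th:discrepancy-def}, $\|F(\upddb) - \fdelta\| \ge \myb\delta$, gives
\begin{align*}
(\myb\delta)^r \le \|F(w_\rho) - \fdelta\|^r + \pardelb\,\|w_\rho - \ubar\|_1^r \le (b_0\delta)^r + \pardelb\,\|w_\rho - \ubar\|_1^r
\end{align*}
for some $1 < b_0 < \myb$, so that $\pardelb$, and therefore $\pardel \ge \pardelb / \myc$, is bounded below by the expected a priori order. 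A matching upper bound on $\pardel$ follows from testing the minimality of $\updd$ against the same $w_\rho$, combining the outcome with the $-a$-norm estimate above and the interpolation inequality~\eqref{eq:interpol2}, and using the smoothness of $\ust$ to control $\|\updd - \ubar\|_1$ quantitatively.

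Plugging these matched two-sided bounds on $\pardel$ into Theorem~\ref{th:upardel-esti} then delivers the assertion: for H\"older smoothness, both $f_1(\pardel) = O(\pardel^{\mykap p})$ and $K_1\delta/\pardel^{\mykap a}$ evaluate to $O(\delta^{p/(p+a)})$; for low order smoothness, the corresponding logarithmic rate is obtained; and for no explicit smoothness, $\pardel \to 0$ together with the no-smoothness case of Theorem~\ref{th:upardel-esti} yields $f_1(\pardel) \to 0$, while the noise term $K_1\delta/\pardel^{\mykap a}$ is controlled via the lower bound above.

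The main obstacle will be the construction of the auxiliary family $w_\rho$ with the correct approximation rates in the Banach scale, since spectral calculus is unavailable. In its place one must exploit the fractional powers $G^p$, the $C_0$-semigroup structure, the interpolation inequality~\eqref{eq:interpol2}, and, for low-order smoothness, the characterization of $\domain(\log G)$ via \cite[Satz 1]{Nollau69} together with the chain~\eqref{eq:chain2}. Once this approximation toolkit is in place, the overall structure of the argument parallels the Hilbert-scale analysis of \cite{HofPla20}, with spectral projections replaced by resolvent-based constructions.
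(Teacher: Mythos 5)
Your overall toolkit (smooth auxiliary elements, minimality tests, a lower bound on $\pardel$ extracted from $b\delta \le \|F(u_{\alpha_*'}^{\delta})-f^{\delta}\|$, and the interpolation inequality \eqref{eq:interpol2}) is the right one, but two steps of the plan have genuine holes. First, the construction of $w_\rho$ via the single resolvent $(\rho I+G)^{-1}$ cannot produce $\|w_\rho-\ust\|_{-a}=O(\rho^{p+a})$: writing $w_\rho-\ust=-\rho(G+\rho I)^{-1}(\ust-\ubar)$, the companion operator $S_\rho=\rho(G+\rho I)^{-1}$ has qualification $1$, so $\|G^{a}S_\rho G^{p}w\|$ saturates at $O(\rho)$ as soon as $p+a>1$ --- which is the generic situation here (e.g.\ $a=1$ in the paper's example, where $p+a>1$ for every $p>0$). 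This is exactly why Section~\ref{sec:verifications} insists on regularization operators with saturation $p_0\ge 1+a$ and uses $m$-times iterated Lavrentiev regularization with $m\ge 1+a$ rather than the plain resolvent. You correctly identify the auxiliary family as the main obstacle, but the specific tool you name is the one that fails.

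Second, the ``matching upper bound on $\pardel$'' is both unjustified and unnecessary. Testing minimality of $u_{\pardel}^{\delta}$ against $w_\rho$ bounds the product $\pardel\,\|u_{\pardel}^{\delta}-\ubar\|_1^{r}$ from above; to convert this into an upper bound on $\pardel$ itself you would need a quantitative \emph{lower} bound on $\|u_{\pardel}^{\delta}-\ubar\|_1$, which your sketch does not supply and which is delicate precisely because $\ust\notin\ix_1$. The paper's proof never evaluates $f_1(\pardel)$ from Theorem~\ref{th:upardel-esti} and needs no upper bound on $\pardel$ at all. Instead it (i) obtains $\|u_{\pardel}^{\delta}-\ust\|_{-a}=O(\delta)$ directly from $\|F(u_{\pardel}^{\delta})-f^{\delta}\|\le b\delta$, the noise model and \eqref{eq:normequiv_b} (this part you have); (ii) uses only the \emph{lower} bound on $\pardel$ together with the minimality estimate of Lemma~\ref{th:upardel_lemma} to bound $\|u_{\pardel}^{\delta}-\ubar\|_1$ from above (Corollary~\ref{th:upardel_cor_b}); and (iii) interpolates via \eqref{eq:interpol2} between the $(-a)$-norm and the $1$-norm of the difference between $u_{\pardel}^{\delta}$ and the auxiliary element taken at a $\delta$-dependent (not $\pardel$-dependent) parameter $\beta_*$ from Lemma~\ref{th:auxel_2}. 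Restructuring your argument along these lines removes the problematic step entirely; you should also treat separately the degenerate branch $\pardel=\infty$ (or $\pardel$ bounded away from $0$), which your claim that $\pardel\to 0$ glosses over and which the paper handles explicitly in Corollaries~\ref{th:upardel_cor} and~\ref{th:upardel_cor_b}.
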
%%
%
%%\noindent
The proof of Theorem \ref{th:discrepancy_main_result} is given in Section
\ref{proof_main:theorem}.
\section{An illustrative example}
\label{sec:example}
The following example with specific Banach spaces and nonlinear forward operator is intended to illustrate the assumptions stated above and to indicate the specificity of occurring non-reflexive Banach spaces.
This example should show that the general mathematical framework developed in this paper is applicable. The considered basis space is the non-reflexive and non-separable space $ \ix = \linf $
with the essential supremum norm $ \norm{\cdot} = \normmax{\cdot} $ possessing a separable pre-dual space $\iz=L^1(0,1)$. The generator $G$  of the scale of Banach spaces is given by
\begin{align}
[Gu](x) = \int_0^x u(\xi) \, d\xi \qquad (0 \le x \le 1, \quad u \in \linf).
\label{eq:exopG}
\end{align}
Below we give some properties of $ G $:
\begin{mylist_indent}
\item
The operator $ G: \linf \to \linf $ is of \postype
with constant $ \kappa_* = 2 $,
see, e.g., \cite{Plato95}.

\item
$G$ is a compact operator, which possesses a compact pre-adjoint operator $ \tilde G: \lone \to \lone $, which is characterized by $$ [\tilde Gv](x) = \int_x^1 v(\xi) \, d\xi \qquad (0 \le x \le 1, \quad v \in \lone).$$

\item $G$ has a trivial nullspace and a non-dense range
$$ \R(G) = W^{1,\infty}_0(0,1) :=  \inset{ u \in W^{1,\infty}(0,1) : u(0) = 0 }, $$
with $$ \overline{\R(G)} =
C_0[0,1] := \inset{u \in C[0,1]:u(0) = 0 }.$$

\end{mylist_indent}

{\parindent0em As} a consequence of the last item we have that $\ix_1=W^{1,\infty}_0(0,1)$ with $\|u\|_1:=\|u^\prime\|_\infty$.
%% and associated separable pre-dual space $\iz_1=W^{1,1}_0(0,1)$.
%

With $\ix=\yps=L^\infty(0,1)$, the nonlinear forward operator of this example is
$ F: \linf \to \linf $ given by
\begin{align}
[F(u)](x) = \exp((Gu)(x)) \qquad (0 \le x\le 1,\quad u \in \linf).
\label{eq:exopF}
\end{align}
This operator $F$ is weak$^*$-to-weak sequentially continuous, because $F$ is a composition
of the continuous outer nonlinear exponential operator and the inner linear integration operator $G$, both mapping in $L^\infty(0,1)$. The operator $G$ transforms weak$^*$-convergent sequences in  $L^\infty(0,1)$ to norm-convergent sequences in this space, because $G$ is compact and has a pre-adjoint operator (cf.~\cite[Lemma~2.5]{Gatica18}).

Moreover, the operator $ F $ is Fr\'{e}chet differentiable on its domain of definition $ \domain(F) = \linf $, with
$ [F^\prime(u)] h=[F(u)] \cdot Gh $.
Now consider some function $ \ust \in \linf $ which is assumed to be fixed throughout this section. We then have
\begin{align*}
c_1 \le F\ust \le c_2 \ \textup{ on } [0,1], \; \mbox{with} \;
c_1 := \exp(-\normmax{G\ust}) > 0, \ c_2 \defeq \exp(\normmax{G\ust}),
\end{align*}
so that
\begin{align}
c_1 \vert Gh \vert \le  \vert F^\prime(\ust)h \vert \le c_2 \vert Gh \vert \quad \textup{on } [0,1] \qquad (h \in \linf).
\label{eq:Fprimebounds}
\end{align}
For any $ u \in \linf $, we denote by $ \Delta = \Delta(u) $ and
$ \theta = \theta(u) $ the following functions:
\begin{align*}
\Delta & \defeq Fu-F\ust \in \linf, \qquad \theta \defeq G(u-\ust) \in \linf.
\end{align*}
Thus,
$ \normmone{u-\ust} = \normmax{\theta} $, and we refer to \refeq{taunorm} for the definition of
$ \normmone{\cdot}$.

Below we show that the basic estimates \refeq{normequiv_a} and
\refeq{normequiv_b} are satisfied for that example with $a=1$. As a preparation, we note that
\begin{align}
\vert \Delta-  F^\prime(\ust) (u-\ust) \vert
\le \vert \theta \vert \, \vert \Delta \vert \quad \textup{on } [0,1],
\label{eq:hofmann-estimate}
\end{align}
and refer in this context to \cite[Sect.~4.4]{HHMP22}. In this reference,
the  same $F$ is analyzed as an operator mapping in $L^2(0,1)$, where moreover
its relation to a parameter estimation problem
for an initial value problem of a first order ordinary differential equation
is outlined.
\begin{myenumerate}
\item
We first show that \refeq{normequiv_a} holds. Even more general we show that it holds for any $ u \in \linf $ sufficiently close to $ \ust $, not only for $ u \in \ix_1$.
From \refeq{Fprimebounds} we have that
\begin{align*}
\vert \Delta-  F^\prime(\ust) (u-\ust) \vert
\ge
\vert \Delta \vert -  \vert F^\prime(\ust) (u-\ust) \vert
\ge
\vert \Delta \vert - c_2 \vert \theta \vert \quad \textup{on } [0,1],
\end{align*}
and \refeq{hofmann-estimate} then implies the estimate
\begin{align*}
\vert \Delta \vert - c_2 \vert \theta \vert
\le
\vert \theta \vert \ \vert \Delta \vert
\quad \textup{on } [0,1].
\end{align*}
For any $ u \in \linf $ satisfying $ \normmax{\theta} \le \myrr < 1 $, we thus have
$ \vert \Delta \vert \le \myrr \vert \Delta \vert + c_2 \vert \theta \vert$
and therefore
$ (1-\myrr)\vert \Delta \vert \le c_2 \vert \theta \vert $
on $ [0,1] $.
This finally yields
$$ \tfrac{1-\myrr}{c_2} \normmax{\Delta} \le \normmax{\theta} \quad \textup{for }
\normmax{\theta} \le \myrr \qquad (0 < \myrr < 1),
$$
from which the first required nonlinearity condition \refeq{normequiv_a} follows immediately.

\item
We next show that \refeq{normequiv_b} holds, in fact for any $ u \in \linf $  sufficiently close to $ \ust $.
From \refeq{Fprimebounds} we have
\begin{align*}
\vert \Delta-  F^\prime(\ust) (u-\ust) \vert
\ge
\vert F^\prime(\ust) (u-\ust) \vert - \vert \Delta \vert
\ge
c_1 \vert \theta \vert - \vert \Delta \vert \quad \textup{on } [0,1],
\end{align*}
and \refeq{hofmann-estimate} then implies that
\begin{align*}
c_1 \vert \theta \vert \le \vert \Delta \vert
+ \vert \theta \vert \, \vert \Delta \vert
\quad \textup{on } [0,1].
\end{align*}
For any $ 0 < \varepsilon < c_1 $ and $ u \in \linf $ satisfying $ \normmax{\Delta} \le c_1 - \varepsilon $, we thus have
$ c_1\vert \theta \vert \le \vert \Delta \vert + (c_1-\varepsilon) \vert \theta \vert
$
and therefore $ \varepsilon \vert \theta \vert \le \vert \Delta \vert $
on $ [0,1] $.
This provides us with the estimate $ \varepsilon \normmax{\theta} \le \normmax{\Delta},$ which is valid for
$\normmax{\Delta} \le c_1-\varepsilon \; (0 < \varepsilon < c_1).$
This, however, yields directly the second required nonlinearity condition \refeq{normequiv_b} and completes the list of requirements imposed by Assumption~\ref{th:main_assump}.
\end{myenumerate}
\section{Numerical case studies} \label{sec:study}
In this section, we verify the main result in Theorem \ref{th:discrepancy_main_result} for the situation of H\"older smoothness.
For this, we recall the example considered in Section \ref{sec:example}.
In particular, for the spaces $\ix =\mathcal{Y}=\linf$, equipped with the essential supremum norm $ \norm{\cdot} = \normmax{\cdot} $, we consider the operator equation (\ref{eq:opeq}), where the nonlinear forward operator $ F: \linf \to \linf $ is given by (\ref{eq:exopF}).
In this context, the integration operator $G$, defined as in (\ref{eq:exopG}),
generates the space
$$\ix_{1}=\R(G)=W^{1,\infty}_0(0,1) :=  \inset{ u \in W^{1,\infty}(0,1) : u(0) = 0 }\,,$$
with norm $\|u\|_1:=\|u^\prime\|_\infty$.
As verified in the previous section, this example satisfies Assumption \ref{th:main_assump} with $a=1$.

In the numerical experiments presented below, we consider the model equation $ F(u) = \fst $ with $ \fst(x)=\exp(x^{p+1}/(p+1)), \, 0 \le x \le 1 $,
with two different values $p$ from the interval $(0,1)$.
The solution is obviously given by  $u^{\dagger}(x)=x^{p}$ for $ 0 \le x \le 1 $.
It satisfies $u^{\dagger}\in \ix_{p}$ which
follows from the fact that the fractional powers $ G^p $ coincide with
Abel integral operators, and thus $ [G^p \Gamma(1+p)](x) = x^p, \, 0 \le x \le 1 $,
where $ \Gamma $ denotes Euler's gamma function.
For details, we refer to
\cite[p.~9]{Gorenflo_Vessella91} and \cite{Plato97.2}.
Note that $u^{\dagger} \notin \ix_{1}$,
hence we have an oversmoothing penalty term in the Tikhonov functional $\pad{\myu}$ defined as in (\ref{eq:TR}).
To find a regularized solution for $u^{\dagger}$, we set $\myubar=0$ as an initial guess and $r=1$ in the minimization problem (\ref{eq:TR}).
We use R programming software \cite{RProg} for the implementation.
The interval $[0,1]$ is partitioned by using equidistant grid points $0=x_0<\ldots<x_N=1$ with $N=100$.
To approximate the functions $u$ on the given grid,
we exploit linear splines
that vanish at $ x = 0 $.
In what follows, we use the notation $$\lVert u\rVert:=\max_{i=0,\ldots,N}|u_{i}|$$ for the discrete norm.
We simulate perturbed observations $\fdelta_{i}$, $i=0,\ldots,N$, as follows:
\[
\fdelta_{i}=
\begin{cases}
f^{\dagger}_{i}+\delta\frac{\rho_{i}}{\lVert \rho \rVert}, \quad &i=1,\ldots,N\,, \\
f^{\dagger}_{0} \quad &i=0\,.
\end{cases}\]
In this setting, $f^{\dagger}_{i}=(F(u^{\dagger}))_{i}=\exp(x_{i}^{p+1}/(p+1))$, for $i=0,\ldots,N$,
denotes the simulated right-hand side of operator equation (\ref{eq:opeq}), and the vector $\rho=(\rho_{1},\ldots,\rho_{N})^{T}$ consists of independent and identically distributed standard Gaussian variables $\rho_{i}$, for $i=1,\ldots,N$.
The discrepancy principle is implemented sequentially as follows (see Remark 4.8 in \cite{HofPla20}):
\begin{itemize}
\item Choose initial constants $b>e_{r}$, $\theta>1$, and $\alpha^{(0)}>0$.
\item If $\lVert F(u_{\alpha^{(0)}}^{\delta})-{\fdelta}\rVert\geq b\delta$ holds, proceed with $\alpha^{(k)}=\theta^{-k}\alpha^{(0)}$, for $k=1,2\ldots$, until $\lVert F(u_{\alpha^{(k)}}^{\delta})-{\fdelta}\rVert\leq b\delta\leq \lVert F(u_{\alpha^{(k-1)}}^{\delta})-{\fdelta}\rVert$ is satisfied for the first time.
On that occasion, set $\alpha_{\ast}=\alpha^{(k)}$.
\item If $\lVert F(u_{\alpha^{(0)}}^{\delta})-f^{\delta}\rVert\leq b\delta$ holds, proceed with $\alpha^{(k)}=\theta^{k}\alpha^{(0)}$, for $k=1,2\ldots$, until $\lVert F(u_{\alpha^{(k-1)}}^{\delta})-\fdelta\rVert\leq b\delta\leq \lVert F(u_{\alpha^{(k)}}^{\delta})-{\fdelta}\rVert$ is satisfied for the first time.
Set $\alpha_{\ast}=\alpha^{(k-1)}$.
\end{itemize}

Within the minimization steps, we use the command \texttt{fminunc} included in the package \texttt{pracma}.
Table \ref{tablep03} and \ref{tablep07} illustrate the results of Algorithm \ref{th:discrepancy-def} for $p=0.3$ and $p=0.7$, respectively, and for decreasing values of $\delta$.
The initial values are chosen as $b=2$, $\theta=2$, and $\alpha^{(0)}=1$. Except for the second column of Table \ref{tablep03}, all values are rounded to four decimal places.
The second columns of the tables present the values of the regularization parameter $\alpha_{\ast}$ chosen by the discrepancy principle.
The third columns illustrate the corresponding regularization errors. The last columns confirm the statement of Theorem \ref{th:discrepancy_main_result}.
\begin{table}
\caption{Numerical results of Algorithm \ref{th:discrepancy-def} for $p=0.3$.}\label{tablep03}
\begin{center}
\begin{tabular}{@{}ccccc}
\hline
$\mathbf{\delta}$  & $\alpha_{\ast}$ &$\lVert u_{\alpha_{\ast}}^{\delta}-u^{\dagger}\rVert$ & $\dfrac{\lVert u_{\alpha_{\ast}}^{\delta}-u^{\dagger}\rVert}{\delta^{p/(p+1)}}$ \\
\hline
0.0500 & $7.813 \cdot 10^{-3}$ & 0.2118  & 0.4228 \\
0.0250 & $7.813 \cdot 10^{-3}$ & 0.2124  & 0.4975 \\
0.0125 & $1.953 \cdot 10^{-3}$ & 0.1929  & 0.5304 \\
0.0062 & $2.441 \cdot 10^{-4}$ & 0.1496  & 0.4827 \\
0.0031 & $4.883 \cdot 10^{-4}$ & 0.1559  & 0.5902 \\
0.0016 & $1.221 \cdot 10^{-4}$ & 0.1217  & 0.5405 \\
0.0008 & $6.104 \cdot 10^{-5}$ & 0.1157  & 0.6033 \\
0.0004 & $3.052 \cdot 10^{-5}$ & 0.0920  & 0.5625 \\
0.0002 & $7.629 \cdot 10^{-6}$ & 0.0919  & 0.6597 \\
\hline
\end{tabular}
\end{center}
\end{table}
\begin{table}
\caption{Numerical results of Algorithm \ref{th:discrepancy-def} for $p=0.7$.}\label{tablep07}
\begin{center}
\begin{tabular}{@{}cccc}
\hline
$\mathbf{\delta}$  & $\alpha_{\ast}$ &$\lVert u_{\alpha_{\ast}}^{\delta}-u^{\dagger}\rVert$  & $\dfrac{\lVert u_{\alpha_{\ast}}^{\delta}-u^{\dagger}\rVert}{\delta^{p/(p+1)}}$ \\
\hline
  0.0500 & 0.0156 & 0.0916  & 0.3146 \\
  0.0250 & 0.0156 & 0.0869  & 0.3967 \\
  0.0125 & 0.0156 & 0.0703  & 0.4273 \\
  0.0062 & 0.0078 & 0.0444  & 0.3586 \\
  0.0031 & 0.0078 & 0.0532  & 0.5726 \\
  0.0016 & 0.0078 & 0.0253  & 0.3619 \\
  0.0008 & 0.0005 & 0.0246  & 0.4682 \\
  0.0004 & 0.0010 & 0.0171  & 0.4323 \\
  0.0002 & 0.0001 & 0.0125  & 0.4209 \\
\hline
\end{tabular}
\end{center}
\end{table}

Figure \ref{FigureA} shows the shapes (solid lines) of minimizers $u_{\alpha}^{\delta}$ of the Tikhonov functional in the case $p=0.3$ for a fixed noise level $\delta=0.0125$ and a series of regularization parameters $\alpha>0$ with decreasing values.
The relative error $\lVert F(u^{\dagger})-\fdelta\rVert/{\lVert F(u^{\dagger})\rVert}$ is given by $0.01$.
Dotted lines represent in all five pictures the graph of the solution $u^{\dagger}(x)=x^{0.3}, \, 0 \le x \le 1$, to be reconstructed. In the first picture, for the largest $\alpha$, the regularized solution is too smooth. By reducing the values of $\alpha$, the recovery gets
improved. Precisely, the third picture yields the best approximate solution, which corresponds with $\alpha_*$ from the discrepancy principle. As a consequence of the ill-posedness of the problem, more and more oscillating solutions occur when $\alpha$ further tends to zero.

%%\begin{figure}
%%\includegraphics[trim=6cm 7.5cm 0  5cm, scale=1,page=1]{HKP_Graphs.pdf}
%%%\caption{Behaviour of minimizing functions $u_{\alpha}^{\delta}$ for $\delta=0.0125$ and decreasing values of $\alpha$.\label{FigureA}}
%%%\end{figure}

\begin{figure}
\caption{Behaviour of minimizing functions $u_{\alpha}^{\delta}$ for $\delta=0.0125$ and decreasing values of $\alpha$.\label{FigureA}}
\begin{center}
\centering
 \begin{tikzpicture}[trim axis left, scale=0.8]

    \begin{axis}[ name=plot1,
    xmin = 0, xmax = 1,
    ymin = 0, ymax = 1.3,
     xlabel=$x$, ylabel=$u(x)$,
     at={(-3.66,0)}
   ]
        \addplot[black,
         mark = .,
         thick] table [x = {"x"}, y = {"y1"}] {\mytable};
        \addplot[black,
        mark=.,
        thick,
        domain=0:1,
        smooth,
        samples=100,
        dotted]{x^0.3};
        \node[draw] at (0.7,0.2) {\begin{tabular}{c} $\alpha=1.25 \cdot 10^{-1}$ \\ $\lVert u_{\alpha}^{\delta}-u^{\dagger}\rVert=0.418$ \end{tabular}};
    \end{axis}

    \begin{axis}[name=plot2,
    xmin = 0, xmax = 1,
    ymin = 0, ymax = 1.3,
    xlabel=$x$, ylabel=$u(x)$,
    at=(plot1.right of south east),
     anchor=left of south west]
        \addplot[black, mark = .,thick] table [x = {"x"}, y = {"y2"}] {\mytable};
        \addplot[black,
        mark=.,
        thick,
        domain=0:1,
        smooth,
        samples=100,
        dotted]{x^0.3};
        \node[draw] at (0.7,0.2) {\begin{tabular}{c} $\alpha=1.56\cdot10^{-2}$ \\ $\lVert u_{\alpha}^{\delta}-u^{\dagger}\rVert=0.222$ \end{tabular}};
    \end{axis}

%%\newpage
%%\end{tikzpicture}
%%\end{center}
%%%\end{figure}
%%
%%\begin{figure}
%%\caption{Behaviour of minimizing functions $u_{\alpha}^{\delta}$ for $\delta=0.0125$ and decreasing values of $\alpha$.\label{Figure1a}}
%%\begin{center}
%%% \begin{tikzpicture}[trim axis left, scale=0.8]
    \begin{axis}[name=plot4,
    xmin = 0, xmax = 1,
    ymin = 0, ymax = 1.3,
    xlabel=$x$, ylabel=$u(x)$,
    at=(plot2.below south west), anchor=above north west]
        \addplot[black, mark = .,thick] table [x = {"x"}, y = {"y4"}] {\mytable};
        \addplot[black,
        mark=.,
        thick,
        domain=0:1,
        smooth,
        samples=100,
        dotted]{x^0.3};
        \node[draw] at (0.7,0.2) {\begin{tabular}{c} $\alpha=2.44\cdot10^{-4}$ \\ $\lVert u_{\alpha}^{\delta}-u^{\dagger}\rVert=0.219$ \end{tabular}};
    \end{axis}

    \begin{axis}[name=plot3,
    xmin = 0, xmax = 1,
    ymin = 0, ymax = 1.3,
    xlabel=$x$, ylabel=$u(x)$,
    at=(plot4.left of south west), anchor=right of south east]
        \addplot[black, mark = .,thick] table [x = {"x"}, y = {"y3"}] {\mytable};
        \addplot[black,
        mark=.,
        thick,
        domain=0:1,
        smooth,
        samples=100,
        dotted]{x^0.3};
        \node[draw] at (0.7,0.2) {\begin{tabular}{c} $\alpha=1.95\cdot10^{-3}$ \\ $\lVert u_{\alpha}^{\delta}-u^{\dagger}\rVert=0.193$ \end{tabular}};
    \end{axis}

    \begin{axis}[name=plot6,
    xmin = 0, xmax = 1,
    ymin = 0, ymax = 1.3,
    xlabel=$x$, ylabel=$u(x)$,
    at=(plot3.below south west), anchor=above north west]
        \addplot[black, mark = .,thick] table [x = {"x"}, y = {"y6"}] {\mytable};
        \addplot[black,
        mark=.,
        thick,
        domain=0:1,
        smooth,
        samples=100,
        dotted]{x^0.3};
        \node[draw] at (0.7,0.2) {\begin{tabular}{c} $\alpha=1.53\cdot10^{-5}$ \\ $\lVert u_{\alpha}^{\delta}-u^{\dagger}\rVert=0.465$ \end{tabular}};
    \end{axis}

\end{tikzpicture}
\end{center}
\end{figure}
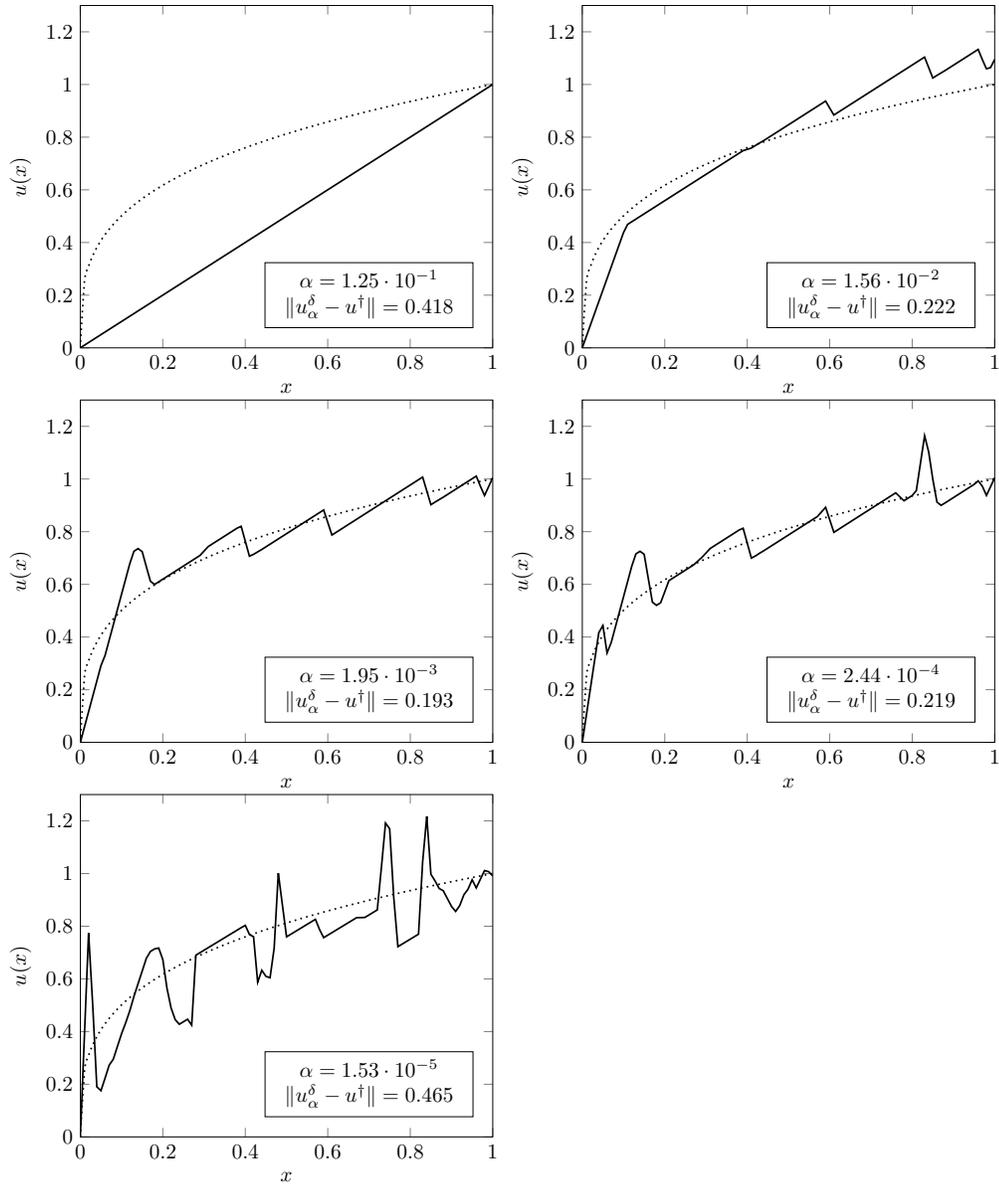
\section{Constructions and verifications}
\label{sec:verifications}
In this section, we verify the main result of the paper. For this purpose, we return to   the general setting considered in Section \ref{sec:preparations}, i.e., $ G: \ix \to \ix  $ denotes a bounded linear operator which is of \postype, one-to-one and has an unbounded inverse, where $ \ix $ is a Banach space.
\subsection{Introduction of auxiliary elements}
For the auxiliary elements introduced below, we consider linear bounded regularization operators associated with $ \G $,
\begin{align}
R_\parb: \ix \to \ix	\for \parb > 0
\label{eq:rbeta}
\end{align}
and its companion operators
\begin{align}
S_\parb \defeq  I - R_\parb \G \for \parb > 0.
\label{eq:sbeta}
\end{align}
Throughout this section, we assume that the following conditions are satisfied:
\begin{align}
\norm{ R_\parb }_{\scriptscriptstyle \mathcal{L}(\ix)} & \le \tfrac{c_*}{\parb} \for \parb > 0,
\label{eq:wachstum} \\
\norm{ S_\parb G^\pp }_{\scriptscriptstyle \mathcal{L}(\ix)} & \le \cp  \parb^{\pp} \for \parb> 0,
\qquad (0 \le \pp \le \pp_0)
\label{eq:abfall} \\
R_\parb G & = G R_\parb \for \parb > 0,
\label{eq:commute}
\end{align}
where $ 0 < \pp_0 < \infty $ is a finite number to be specified later, and $ c_* $ and $ \cp $ denote finite constants. We assume that $ \cp $ is bounded as a function of $ p $.
\begin{example} An example is given by Lavrentiev's $m$-times iterated method with an integer $ m \ge 1 $. Here, for $ f \in \ix $ and $ v_0 = 0 \in \ix $, the element $ R_\parb f $ is given by
\begin{align*}
(\G + \parb I)v_n & =  \parb v_{n-1} + f \for n = 1,2,\ldots,m, \qquad
R_\parb f \defeq v_m.
\end{align*}
The operator $ R_\parb $ can be written in the form
$$ R_\parb = \parb^{-1} \sum_{j=1}^{m} \parb^j (G + \parb I)^{-j},  $$
and the
companion operator is given by $ S_\parb = \parb^m (G + \parb I)^{-m} $.
For $ m = 1 $, this gives Lavrentiev's classical regularization method,
$ R_\parb = (G + \parb I)^{-1} $.
For this method, the conditions \refeq{wachstum}--\refeq{commute} are satisfied with
$ p_0 = m $. In fact,
for integer $ 0 \le p \le m $,
estimate \refeq{abfall} holds with constant $ c_p = (\kappa_*+1)^m $, see
\cite[Lemma 1.1.8]{Plato95}.
From this intermediate result and the interpolation inequality
\refeq{interpol2}, inequality \refeq{abfall} then follows
for non-integer values $ 0 < p < m $, with constant $ c_p = 2(\kappa_*+1)^{m+1} $.
\remarkend
\end{example}
We are now in a position to introduce \emph{auxiliary elements}
which provide an essential tool for the analysis of the regularization properties of \tikreg considered in our setting.
They are defined as follows,
\begin{align}
\ualpaux \defeq \ubar + R_\parb \G (\ust-\ubar)
=
\ust - S_\parb(\ust-\ubar)
\for \parb > 0,
\label{eq:uaux-def}
\end{align}
where $\G$ is the generator of the scale of normed spaces introduced in Section \ref{postype_operators}, and $ R_\parb, \parb > 0 $, is an arbitrary family of regularizing operators as in \refeq{rbeta} satisfying the conditions \refeq{wachstum}--\refeq{commute}
with saturation
$$ p_0 \ge 1+a, $$
and $ S_\parb, \parb > 0 $, denotes the corresponding companion operators, cf.~\refeq{sbeta}.
In addition,
the solution $ \ust$ of the operator equation \eqref{eq:opeq} and the corresponding initial guess $\ubar $ are as introduced above.
The basic properties of the auxiliary elements \refeq{uaux-def} are summarized in Lemma~\ref{th:auxel_1} below.

We next state another property of regularization operators which is also needed below.
\begin{lemma}
\label{th:auxel_0}
%%Let $ R_\parb, \parb > 0 $, be an arbitrary family of regularizing operators as in \refeq{rbeta} satisfying the conditions \refeq{wachstum}--\refeq{commute}.
%%Then
There exists some finite constant $ c > 0  $ such that, for each $ 0 < \pp \le 1$, we have
\begin{align*}
\norm{ R_\parb \G^\pp }_{\scriptscriptstyle \mathcal{L}(\ix)} & \le c \parb^{\pp-1} \for \parb > 0.
\end{align*}
\end{lemma}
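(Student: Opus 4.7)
The plan is to obtain the estimate by interpolating, on each vector $u$, between the two endpoints $p=0$ and $p=1$, using the interpolation inequality \eqref{eq:interpol2} for fractional powers of $G$.

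First, I would verify that $R_\beta$ commutes with $G^p$ for every $p\in(0,1)$. Condition \eqref{eq:commute} gives $R_\beta (G+sI) = (G+sI) R_\beta$ for every $s>0$, whence $R_\beta (G+sI)^{-1} = (G+sI)^{-1} R_\beta$. Substituting this into Balakrishnan's formula \eqref{eq:frac-power} and using the boundedness of $R_\beta$ to take it outside the integral yields $R_\beta G^p = G^p R_\beta$ on $\ix$.

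Second, I would collect the two endpoint bounds. The case $p=0$ is just \eqref{eq:wachstum}: $\|R_\beta\|_{\mathcal{L}(\ix)} \le c_*/\beta$. The case $p=1$ comes from $R_\beta G = I - S_\beta$ combined with the decay estimate \eqref{eq:abfall} at $p=0$, which gives $\|S_\beta\|_{\mathcal{L}(\ix)} \le c_0$, and hence
\begin{equation*}
\|R_\beta G\|_{\mathcal{L}(\ix)} \le 1 + c_0.
\end{equation*}

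Third, for $0<p<1$ and arbitrary $u\in\ix$, I would apply the interpolation inequality \eqref{eq:interpol2} with exponents $p<q=1$ (for which the constant may be chosen as $c=2(\kappa_*+1)$, independently of $p$) to the element $v := R_\beta u$. Using the commutativity established above, this yields
\begin{equation*}
\|R_\beta G^p u\| = \|G^p R_\beta u\| \le 2(\kappa_*+1)\,\|G R_\beta u\|^{p}\,\|R_\beta u\|^{1-p} = 2(\kappa_*+1)\,\|R_\beta G u\|^{p}\,\|R_\beta u\|^{1-p}.
\end{equation*}
Inserting the two endpoint bounds gives
\begin{equation*}
\|R_\beta G^p u\| \le 2(\kappa_*+1)\,(1+c_0)^{p}\,\bigl(c_*/\beta\bigr)^{1-p}\|u\| \le c\,\beta^{p-1}\|u\|,
\end{equation*}
with $c := 2(\kappa_*+1)\,\max\{1,1+c_0\}\,\max\{1,c_*\}$, which is finite and independent of $p\in(0,1]$. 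The endpoint $p=1$ is covered by the separate bound above with the same constant $c$.

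No real obstacle is anticipated; the only point that requires a brief justification is the commutativity $R_\beta G^p = G^p R_\beta$, which is a routine consequence of \eqref{eq:commute} and \eqref{eq:frac-power}.
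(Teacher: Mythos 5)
Your proposal is correct and follows essentially the same route as the paper's own proof: commute $R_\beta$ with $G^p$, apply the interpolation inequality \eqref{eq:interpol2} to $R_\beta u$ with the constant $2(\kappa_*+1)$, and insert the endpoint bounds $\|R_\beta G\|_{\scriptscriptstyle\mathcal{L}(\ix)}\le 1+c_0$ and $\|R_\beta\|_{\scriptscriptstyle\mathcal{L}(\ix)}\le c_*/\beta$. The only difference is that you spell out the commutativity $R_\beta G^p=G^p R_\beta$ via Balakrishnan's formula, which the paper takes for granted.
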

\begin{proof}
Since $ R_\parb \G^\pp = \G^\pp R_\parb $, for
$ \kappa_1 = 2(\kappa_*+1) $
we have
\begin{align*}
\norm{ R_\parb \G^\pp w } & =
\norm{ G^\pp R_\parb w } \le \kappa_1 \norm{ G R_\parb w }^\pp \norm{ R_\parb w }^{1-\pp}
\\
& \le
\kappa_1(c_0+1)^p c_*^{1-p} \norm{ w } \parb^{\pp-1}, \qquad w \in \ix,
\end{align*}
where the first inequality follows from the interpolation inequality \refeq{interpol2}.
For the meaning of the constants $ c_0 $ and $ c_* $, we refer to
\refeq{wachstum} and \refeq{abfall}, respectively.
\end{proof}
\subsection{Auxiliary results for $ \log \G $}
\begin{lemma}
\label{th:low-order-rate}
For each $ \myu \in \domain(\log \G) $ and each $ 0 \le p < \pp_0 $, we have
$$
\norm{ S_\parb G^p \myu  } =
\Landauno{\beta^p \loginv{\tfrac{1}{\parb}}}
\as \beta \to 0.
$$
\end{lemma}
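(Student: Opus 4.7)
The plan is to combine a semigroup identity for $(G^s)_{s\ge 0}$ on $\overline{\R(G)}$ with an approximation argument for $v := (\log G)u$.

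First, since $u \in \domain(\log G) \subset \overline{\R(G)}$ by~\refeq{chain2}, the fundamental theorem of calculus for the $C_0$-semigroup $(G^s)_{s\ge 0}$ gives $G^q u - u = \int_0^q G^s v\,ds$ for every $q > 0$. Applying $S_\beta G^p$ and moving it inside the Bochner integral produces
\begin{equation*}
S_\beta G^p u \;=\; S_\beta G^{p+q} u \;-\; \int_0^q S_\beta G^{p+s} v\,ds.
\end{equation*}
Note that $v$ itself lies in $\overline{\R(G)}$, since each difference quotient $(G^q u - u)/q$ does and this subspace is closed.

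A direct application of~\refeq{abfall} to the integrand together with $\int_0^q \beta^{p+s}\,ds = \beta^p(1-\beta^q)/\log(1/\beta)$ yields only a bound of order $\beta^p/\log(1/\beta)$, which does not suffice. To upgrade to little-$o$, I would exploit the density identity $\overline{\R(G)} = \overline{\R(G^\tau)}$ for every $\tau > 0$, a consequence of the inclusions $\R(G) \subset \R(G^\tau) \subset \overline{\R(G)}$ provided by~\refeq{chain1}. Fixing $\tau \in (0,1)$ small enough that $p + \tau < p_0$, I would decompose $v = G^\tau z_n + r_n$ with $\varepsilon_n := \|r_n\| \to 0$. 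For $q$ small enough that $p+q+\tau \le p_0$, a double use of~\refeq{abfall} then gives
\begin{equation*}
\|S_\beta G^{p+s} v\| \;\le\; c_{p+s+\tau}\,\beta^{p+s+\tau}\|z_n\| + c_{p+s}\,\beta^{p+s}\varepsilon_n \qquad (0 \le s \le q),
\end{equation*}
and after integration in $s$ and division by $\beta^p/\log(1/\beta)$ the full estimate becomes
\begin{equation*}
\frac{\|S_\beta G^p u\|}{\beta^p/\log(1/\beta)} \;\le\; C\,\beta^q \log(1/\beta)\,\|u\| \;+\; \bar c\,\bigl(\beta^\tau \|z_n\| + \varepsilon_n\bigr).
\end{equation*}

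Finally, I would tune $q = q(\beta)$ and $n = n(\beta)$ jointly. The explicit choice $q(\beta) = 2\log\log(1/\beta)/\log(1/\beta)$ makes $q(\beta) \to 0$ while $\beta^{q(\beta)} \log(1/\beta) = 1/\log(1/\beta) \to 0$, killing the first summand. For the second, since $\varepsilon_n \to 0$ and $\beta^{-\tau} \to \infty$, a diagonal construction produces an unbounded $n(\beta)$ with $\varepsilon_{n(\beta)} \to 0$ and $\beta^\tau \|z_{n(\beta)}\| \to 0$ simultaneously. The hard part is precisely this promotion from $O$ to $o$: the uniform operator-norm estimate saturates at order $\beta^p/\log(1/\beta)$, and the needed extra vanishing factor emerges only by approximating $v$ in the strictly smoother subspace $\R(G^\tau)$ and carefully balancing the three free parameters $q$, $\tau$ and $n$ against $\beta$.
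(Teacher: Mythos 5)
Your argument is essentially correct and, once the sketched details are filled in, proves even \emph{more} than the lemma asserts --- but it rests on a misreading of the target. The symbol $\Landauno{\cdot}$ in the statement is a big-$O$ (the paper reserves $\landauno{\cdot}$ for little-$o$), so the bound you dismiss as ``does not suffice'', namely $\norm{S_\parb \G^p u}\le C\,\parb^{p}/\log(1/\parb)$, is exactly what is required. Concretely: fix any $q>0$ with $p+q\le \pp_0$; your identity $S_\parb \G^p u = S_\parb \G^{p+q}u-\int_0^q S_\parb \G^{p+s}v\,ds$ with $v=(\log\G)u$ together with \refeq{abfall} gives $\norm{S_\parb \G^{p+q}u}\le c\,\parb^{p+q}\norm{u}=\Landauno{\parb^p\loginv{\frac{1}{\parb}}}$ and $\bigl\|\int_0^q S_\parb \G^{p+s}v\,ds\bigr\|\le c\,\norm{v}\,\parb^p/\log(1/\parb)$, and you are done. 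The entire second half of your proposal --- the density decomposition $v=\G^\tau z_n+r_n$, the $\parb$-dependent $q$, and the diagonal choice of $n(\parb)$ --- is machinery for the stronger little-$o$ statement, which is not claimed (and not needed elsewhere in the paper).

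As for the route itself, it genuinely differs from the paper's. The paper represents $u=(\lambda I-\log\G)^{-1}w=\int_0^\infty e^{-\lambda q}\G^q w\,dq$ for some $w\in\overline{\R(\G)}$ and $\lambda$ beyond the growth bound of the semigroup, splits the integral at $q=\pp_0-p$, estimates the near part by \refeq{abfall} (the factor $\int_0^{\pp_0-p}\parb^{p+q}\,dq$ produces the logarithm) and the far part by the semigroup growth bound, giving $\Landauno{\parb^{\pp_0}}$. Your fundamental-theorem-of-calculus identity for the generator is the natural counterpart of that Laplace-transform representation, and both extract the logarithmic gain from $\int\parb^{q}\,dq$. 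The paper's version has the minor advantage that only $\norm{w}$ enters and no approximation of $v$ in $\R(\G^\tau)$ is needed; yours, pushed to completion, yields the sharper little-$o$ rate. Either way, the result as stated follows already from the first half of your argument.
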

\begin{proof}
By $ (\G^q)_{q \ge 0} $ a $ C_0 $-semigroup on $\ixtwo $ is defined, and thus
$ \norm{G^q}_{\scriptscriptstyle \mathcal{L}(\ixtwo)} \le C e^{\omega q} $ for $ q \ge 0 $,
where $ \omega > 0 $ and $ C > 0 $ denote suitable constants, and $ \norm{\cdot}_{\scriptscriptstyle \mathcal{L}(\ixtwo)} $ denotes the norm of operators on $ \ixtwo $.
%%%This follows, e.g., from the fact
Therefore, each real $ \lambda > \omega $ belongs to the resolvent set of the operator $ \log \G: \RG \supset \domain(\log \G) \to \RG $, i.e.,
$ (\lambda I - \log \G)^{-1}: \RG \to \RG $ exists and defines a bounded operator,
cf.~\cite[Theorem 5.3, Chapter~1]{Pazy83}.
Since
$$ \R((\lambda I - \log \G)^{-1}) = \domain (\lambda I - \log \G) = \domain( \log \G), $$
we can represent $ u $ as
$$ u = (\lambda I - \log \G)^{-1}w $$
with some $ w \in \RG $. From (cf.~\cite[proof of Theorem 5.3, Chapter 1]{Pazy83})
$$  u = (\lambda I - \log \G)^{-1}w = \int_0^\infty e^{-\lambda q} G^q w \, dq, $$
we obtain
\begin{align*}
S_\parb G^p u = \int_0^\infty e^{-\lambda q} S_\parb G^{p+q} w \, dq = y_1 + y_2,
\end{align*}
with
\begin{align*}
y_1 = \int_0^{\pp_0-p} e^{-\lambda q} S_\parb G^{p+q} w \, dq,
\qquad y_2 = \int_{\pp_0-p}^\infty e^{-\lambda q} S_\parb G^{p+q} w \, dq.
\end{align*}
%
%%which provides the fundamental identity for the
Below we provide suitable estimates for $ y_1 $ and $ y_2 $.
The former term can be estimated as follows for $ \beta < 1 $:
\begin{align*}
\normix{y_1} & \le  c_1 \norm{w} \int_0^{\pp_0-p}  \parb^{p+q}\, dq =
c_1 \norm{w} \parb^p \frac{1}{\log \parb} \parb^q \big\vert_{q=0}^{q=\pp_0-p}
\\
& = c_1 \norm{w}  \parb^p \frac{1}{\vert \log \parb \vert} (1-\parb^{\pp_0-p})
\le c_1 \norm{w}  \parb^p \frac{1}{\vert \log \parb \vert},
\end{align*}
where $ c_1 $ denotes a finite constant.
The element $ y_2 $ can be written as follows:
\begin{align*}
y_2 = \int_{\pp_0-p}^\infty e^{-\lambda q} S_\parb G^{\pp_0} G^{q-(\pp_0-p)} w \, dq,
\end{align*}
and thus we can estimate as follows:
\begin{align*}
\normix{y_2} & \le  c_2 \norm{w} \int_{\pp_0-p}^\infty e^{-\lambda q} \parb^{\pp_0} e^{\omega(q-(\pp_0-p))} \, dq
\\
& \le c_3 \norm{w} e^{-\omega(\pp_0-p)} \parb^{\pp_0}  \int_{\pp_0-p}^\infty e^{-(\lambda-\omega)q} \, dq
=\Landauno{ \parb^{\pp_0}} \as \beta \to 0,
\end{align*}
where $ c_2 $ and $ c_3 $ denote suitable finite constants.
This completes the proof.
\end{proof}
\begin{lemma}
\label{th:low-order-rate-b}
For each $ \myu \in \domain(\log \G) $, we have
$$
\norm{ R_\parb \myu  } =
\Landauno{\frac{1}{\beta \log \frac{1}{\parb}}}
\as \beta \to 0.
$$
\end{lemma}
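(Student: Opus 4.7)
The plan is to mimic the structure of the proof of Lemma \ref{th:low-order-rate}, with the role of $S_\parb G^p$ replaced by $R_\parb$. For $u \in \domain(\log G)$, I would pick $\lambda > \omega$, where $\omega$ comes from the semigroup growth bound $\|G^q\|_{\mathcal{L}(\overline{\R(G)})} \le C e^{\omega q}$, and invoke the same resolvent representation used in that earlier proof: for a corresponding $w \in \overline{\R(G)}$,
\[ u = (\lambda I - \log G)^{-1} w = \int_0^\infty e^{-\lambda q} G^q w \, dq. \]
Applying the bounded linear operator $R_\parb$ and pulling it inside the Bochner integral gives
\[ R_\parb u = \int_0^\infty e^{-\lambda q} R_\parb G^q w \, dq, \]
which I would split at $q = 1$ into pieces $y_1 := \int_0^1 e^{-\lambda q} R_\parb G^q w \, dq$ and $y_2 := \int_1^\infty e^{-\lambda q} R_\parb G^q w \, dq$.

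For $y_1$, Lemma \ref{th:auxel_0} furnishes the key bound $\|R_\parb G^q\|_{\mathcal{L}(\ix)} \le c \parb^{q-1}$ uniformly for $q \in (0,1]$. A direct evaluation of $\int_0^1 e^{-(\lambda + |\log \parb|)q}\,dq$ then shows that $\|y_1\|$ is of order $\|w\|/(\parb |\log \parb|)$ as $\beta \to 0$, which is exactly the rate claimed by the lemma. For $y_2$, I would use the commutativity \eqref{eq:commute} together with the identity $R_\parb G = I - S_\parb$ to write $R_\parb G^q = (I - S_\parb) G^{q-1}$ for $q \ge 1$. Since $\|S_\parb\|_{\mathcal{L}(\ix)}$ is uniformly bounded by \eqref{eq:abfall} with $p = 0$, and $\|G^{q-1} w\| \le C e^{\omega(q-1)} \|w\|$ holds on $\overline{\R(G)}$, the integrand is dominated by a constant multiple of $e^{-(\lambda - \omega)q}$, yielding $\|y_2\|$ uniformly bounded in $\parb$---a contribution of strictly smaller order than $1/(\parb |\log \parb|)$.

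Combining the two estimates produces the bound claimed in Lemma \ref{th:low-order-rate-b}. The argument introduces no conceptually new idea beyond the semigroup setup, the resolvent representation, Lemma \ref{th:auxel_0}, and the commutativity \eqref{eq:commute}. The main checkpoint is bookkeeping: verifying that $R_\parb$ commutes with $G^q$ for every $q > 0$ (inherited from \eqref{eq:commute} through the integral representation \eqref{eq:frac-power} of $G^q$), that interchanging $R_\parb$ with the Bochner integral is legitimate because $R_\parb$ is a bounded linear operator on $\ix$, and that $\lambda$ can be chosen with $\lambda > \omega$ so that both the resolvent formula for $u$ and the convergence of the $y_2$-integral are simultaneously valid.
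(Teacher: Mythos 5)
Your proposal is correct and is essentially the paper's own argument: the paper proves this lemma only by the remark that it ``follows similar to Lemma~\ref{th:low-order-rate}, by making use of Lemma~\ref{th:auxel_0}'', and your write-up is precisely that adaptation (resolvent representation, split of the integral at the point where the bound of Lemma~\ref{th:auxel_0} saturates, and a uniformly bounded tail via $R_\parb G = I - S_\parb$ and the semigroup growth bound). The bookkeeping you flag --- uniformity of the constant in Lemma~\ref{th:auxel_0}, commutation of $R_\parb$ with $G^q$ via \eqref{eq:commute} and \eqref{eq:frac-power}, and $\lambda>\omega$ --- checks out.
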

\begin{proof}
Follows similar to Lemma \ref{th:low-order-rate}, by making use of
Lemma \ref{th:auxel_0}. Details are thus omitted.
\end{proof}
\subsection{Some preparations for low order rates}
\label{low_order_preparations}
In the analysis of low order rates, the functions
\begin{align}
\myphi(t) & = \kla{\mylogb{t}}^{-1}, \quad 0 < t < 1,
\label{eq:phi-def} \\
\mychi{\pm 1}{q}(t) & = t^q \kla{\mylogb{t}}^{\mp 1}, \quad 0 < t < 1
%%\qquad \kla{p = \pm 1, q > 0},
\qquad \kla{q > 0},
\label{eq:chi-def}
\end{align}
will be needed. Below we state some elementary properties of those functions.
%%% $\mychi{p}{q} $ for the case $ p > 0 $
%%which turn out to be helpful in the further analysis.
Note that $ \myphi(t) = \mychi{1}{0}(t) $ holds, so
\refeq{phi-def} is a special case of \refeq{chi-def}.
%
%%\begin{mylist_indent}
\begin{myenumerate_indent}
\item
\label{it:low_oder_prep_a}
For $ q \ge 0 $,
the function $ \mychi{1}{q} $ is monotonically increasing on the interval $ \kla{0,1}  $, with $ \mychi{1}{q}(t) \to 0 $ as $ t \to 0 $.

\item
\label{it:low_oder_prep_b}
For $ q > 0 $, the function $ \mychi{-1}{q} $ is monotonically increasing on the interval $ (0,t_0] $,
with $ t_0 = t_0(q) < 1 $ chosen sufficiently small.
We have $ \mychi{-1}{q}(t) \to 0 $ as $ t \to 0 $.

\item
\label{it:low_oder_prep_c}
For $ q > 0 $, the inverse function $ \mychiinv{1}{q}: (0,1) \to \reza $
%%%, defined on an interval $ (0,s_0) $ for $ 0 < s_0 < 1 $ small enough,
satisfies
\begin{align*}
\mychiinv{1}{q}(s) \sim
%%q^{-p/q} \mychi{-\frac{p}{q}}{\frac{1}{q}}(s) \as s \to 0.
q^{-1/q} \mychi{-1}{1}(s)^{1/q} \as s \to 0.
\end{align*}
This in particular implies that, for each $ 0 < s_0 < 1 $, we have
\begin{align*}
%%\mychiinv{p}{q}(s) \asymp \mychi{-\frac{p}{q}}{\frac{1}{q}}(s),
\mychiinv{1}{q}(s) \asymp \mychi{-1}{1}(s)^{1/q},
\quad 0 < s \le s_0.
\end{align*}

\item
\label{it:low_oder_prep_d}
For each $q > 0 $, we have
$ \myphi(\mychi{\pm 1}{q}(t)) \sim q \myphi(t) $ as $ t \to 0 $. Thus, in particular
for each fixed $ t_1 $ small enough and each $ e > 0 $, we have
\begin{align*}
\myphi(\mychi{\pm 1}{q}(t)^e) \le c_1 \myphi(t), \quad 0 < t \le t_1,
%%%\label{eq:phi-prop1}
\end{align*}
for some appropriate constant $ c_1 $.

% yxxx

\item
\label{it:low_oder_prep_e}
For each constant $ c_2 > 0 $, we have
$ \myphi(c_2t) \sim \myphi(t) $ as $ t \to 0 $,
and thus in particular
\begin{align*}
\myphi(c_2t) \asymp \myphi(t), \quad 0 < t \le t_2,
%%\label{eq:phi-prop2}
\end{align*}
for $ t_2 < 1/c_2 $ fixed.
%% and some appropriate constant $ c_3 $.
%%\end{mylist_indent}
\end{myenumerate_indent}
Here, for two positive, real-valued functions $ f, g: (0,t_0) \to \reza $, the notation $ f(t) \sim g(t) $ as $ t \to 0 $ means $ f(t)/g(t) \to 1 $ as $ t \to 0 $. In addition, $ f(t) \asymp g(t) $ for $ t \in I \subset (0,t_0) $ means that there are finite positive constants $ c_1, c_2 $ such that
$ c_1 f(t) \le g(t) \le c_2 f(t) $ for $ t \in I $.
\subsection{Properties of auxiliary elements}
\label{auxels}
In this section, we present the basic properties of the auxiliary elements, which may be used to verify our convergence results presented below.
\begin{lemma}
\label{th:auxel_1}
Consider the auxiliary elements from \refeq{uaux-def},
generated by regularization operators $ R_\beta, \beta > 0 $, with saturation
$ \pp_0 \ge 1 + a $.
Let the three functions $g_i(\parb)\;(i=1,2,3)$ be given by the following identities:
\begin{align}
& \normix{\ualpaux - \ust}=g_1(\parb),
\label{eq:auxel-a}
\\
& \norma{\ualpaux - \ust} = g_2(\parb)\parb^{a},
\label{eq:auxel-b}
\\
& \normone{\ualpaux-\ubar} = g_3(\parb)\parb^{-1},
\label{eq:auxel-c}
\end{align}
for $ \parb > 0 $, respectively. Those functions $g_i(\parb)\;(i=1,2,3)$  are bounded and have the following properties:
\begin{mylist_indent}
\item (No explicit smoothness) If
$ \ust \in \overline{\R(G)} $, then we have $ g_i(\parb) \to 0 $ as $ \parb \to 0 $ ($i=1,2,3$).
\item (H\"older smoothness)
If $ \ust \in \ix_\pp $ for some $ 0 < \pp \le 1 $,
then
$ g_i(\parb)=\mathcal{O}(\parb^\pp) $ as $ \parb \to 0 $ ($i=1,2,3$).
\item (Low order smoothness)
If $ \ust \in \domain(\log \G) $,
then
$ g_i(\parb)= \Landauno{\loginv{\frac{1}{\parb}}} $
as $ \parb \to 0 $ ($i=1,2,3$).
\end{mylist_indent}
\end{lemma}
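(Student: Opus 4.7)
Set $v := u^\dagger - \bar u$. From the definition \eqref{eq:uaux-def} one immediately has $u_\beta^{\rm aux} - u^\dagger = -S_\beta v$ and $u_\beta^{\rm aux} - \bar u = R_\beta G v$. Using the commutativity \eqref{eq:commute} (which extends to fractional powers of $G$) together with the scale norms in \eqref{eq:taunorm} — in particular $\|\cdot\|_{-a} = \|G^a\cdot\|$ and $\|\cdot\|_1 = \|G^{-1}\cdot\|$ — the three quantities to analyze become
\[
g_1(\beta) = \|S_\beta v\|, \qquad g_2(\beta)\,\beta^a = \|S_\beta G^a v\|, \qquad g_3(\beta)\,\beta^{-1} = \|R_\beta v\|.
\]
Boundedness of each $g_i$ is then immediate from \eqref{eq:wachstum}--\eqref{eq:abfall}, applied at $q=0$ and $q=a$ (admissible since the saturation assumption is $p_0 \ge 1+a$). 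Moreover, since $\bar u \in X_1$ and the chains \eqref{eq:chain1}, \eqref{eq:chain2} give $X_1 \subset X_p \cap \mathcal{D}(\log G) \subset \overline{R(G)}$, the element $v$ inherits exactly whichever of the three smoothness hypotheses is imposed on $u^\dagger$.

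In the H\"older case I would write $v = G^p w$ for some $w \in X$, and apply the saturation estimate \eqref{eq:abfall} at $q = p$ and $q = p+a$ (both $\le p_0$ since $p \le 1$), together with Lemma~\ref{th:auxel_0} at exponent $p$, to obtain
$\|S_\beta v\| = O(\beta^p)$, $\|S_\beta G^a v\| = O(\beta^{p+a})$, and $\|R_\beta v\| = O(\beta^{p-1})$; dividing by the matching power of $\beta$ delivers $g_i(\beta) = O(\beta^p)$ in all three cases. The low-order case is then a direct application of Lemma~\ref{th:low-order-rate} at $p=0$ and $p=a$ to bound $\|S_\beta v\|$ and $\|S_\beta G^a v\|$, and of Lemma~\ref{th:low-order-rate-b} to bound $\|R_\beta v\|$; after dividing by $\beta^0$, $\beta^a$ and $\beta^{-1}$ respectively, each yields the $O(1/\log(1/\beta))$ rate.

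The only genuinely new work lies in the ``no explicit smoothness'' case, where I would invoke density of $R(G)$ in $\overline{R(G)}$: given $\epsilon>0$, pick $w_\epsilon \in X$ with $\|v - G w_\epsilon\| < \epsilon$ and split
\[
S_\beta v = S_\beta(v - G w_\epsilon) + S_\beta G w_\epsilon, \qquad R_\beta v = R_\beta(v - G w_\epsilon) + R_\beta G w_\epsilon.
\]
The first summands are dominated by the uniform bounds $\|S_\beta\| \le c_0$, $\beta^{-a}\|S_\beta G^a\| \le c_a$, $\beta\|R_\beta\| \le c_*$, each contributing a term of size $O(\epsilon)$; the second summands pick up an extra factor of $\beta$ through \eqref{eq:abfall} at $q=1$ and $q=1+a$ (both $\le p_0$) and through Lemma~\ref{th:auxel_0} at $p=1$. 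Taking first $\beta \to 0$ and then $\epsilon \to 0$ drives each $g_i(\beta) \to 0$. The principal (and essentially only) subtlety is notational: one must read $\|\cdot\|_a$ as the \emph{weak} norm $\|G^a\cdot\| = \|\cdot\|_{-a}$ (consistent with \eqref{eq:taunorm} and the nonlinearity bound \eqref{eq:normequiv_b}), since the alternative reading $\|G^{-a}\cdot\|$ would require $v \in R(G^a)$ and would miscount the exponents in the H\"older calculation.
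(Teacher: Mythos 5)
Your proposal is correct and follows essentially the same route as the paper: the same identification $g_1(\beta)=\|S_\beta(u^\dagger-\bar u)\|$, $g_2(\beta)=\beta^{-a}\|G^aS_\beta(u^\dagger-\bar u)\|$, $g_3(\beta)=\beta\|R_\beta(u^\dagger-\bar u)\|$, the same density-plus-uniform-boundedness argument in the no-smoothness case, the estimate \eqref{eq:abfall} together with Lemma~\ref{th:auxel_0} in the H\"older case, and Lemmas~\ref{th:low-order-rate} and \ref{th:low-order-rate-b} in the low-order case. Your reading of the $a$-norm as $\|G^a\cdot\|$ and the observation that $\bar u\in\ix_1$ inherits every relevant smoothness class via \eqref{eq:chain1}--\eqref{eq:chain2} are both consistent with what the paper does, merely spelled out in more detail.
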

\proof
By definition, those three functions $ g_1, g_2 $ and $ g_3 $ under consideration can
be written as follows:
\begin{align*}
g_1(\parb) &= \| S_\parb (\ust-\ubar)\|,
\\
g_2(\parb) &=
\parb^{-a} \| G^a S_\parb (\ust-\ubar) \|, \\
g_3(\parb) &=
\parb \| R_\parb (\ust - \ubar) \|,
\end{align*}
and, according to conditions \refeq{wachstum}--\refeq{commute},
thus are bounded.
\begin{mylist}
\item
The three convergence statements under any missing smoothness assumptions all are verified by making use of the uniform boundedness principle.
We give some details for the function $ g_1 $.
%% by taking into account formula
%%\refeq{abfall} and
%%Lemma \ref{th:auxel_0}. To apply this principle, we consider the
%%parametric family of linear operators $S_\beta: \ix \to \ix$ and
%%in this context the associated limiting process $\beta \to 0$ of the
%%%parameter.
In fact, \refeq{abfall} applied for $ p = 1 $ gives
$S_\beta z \to 0$ as $\beta \to 0$ for all $z$ from the range
$\mathcal{R}(G)$.
The uniform boundedness $\|S_\beta\|_{\scriptscriptstyle \mathcal{L}(\ix)} \le c_0$, cf.~\refeq{abfall} for $ p = 0 $,
and the denseness of $ \R(G) $ in  $\overline{\R(G)}$ then gives
$g_1(\beta)=\|S_\beta (u^\dagger-\ubar )\| \to 0$ as $\beta \to
0$.
The assertions for $g_2$ and $g_3$ follow similarly.

\item We consider H\"older smoothness next. Since $ \ust, \, \ubar \in \ix_\pp $ holds, we have $ \ust - \ubar = G^{\pp}w $ for some $ w \in \ix $.
The statements are now easily obtained from \refeq{abfall} and
Lemma~\ref{th:auxel_0}.

\item
We have $ \ust - \ubar \in \domain(\log G) $, and the statements now easily follow from
Lemmas~\ref{th:low-order-rate} and \ref{th:low-order-rate-b}.

\proofend
\end{mylist}
The preceding lemma allows the construction of smooth approximations in $ \ix_1 $ to $ \ust $, which may be used in the subsequent proofs.
\begin{lemma}
\label{th:auxel_2}
Under the conditions of Lemma \ref{th:auxel_1}, the following holds:
\begin{mylist_indent}
\item (No explicit smoothness) If
$ \ust \in \overline{\R(G)} $, then
for some parameter choice $ \beta = \betast $
%% with $\betast \to \delta $,
we have
\begin{align}
\normix{\ualpauxst - \ust} \to 0,
\quad
\norma{\ualpauxst - \ust} = \Landauno{\delta},
\quad
\normone{\ualpauxst - \ubar} = \landauno{\delta^{-\lfrac{1}{a}}},
\label{eq:auxel-2-a}
\end{align}
as $ \delta \to 0 $.
%%%, respectively.

\item (H\"older smoothness)
If $ \ust \in \ix_\pp $ for some $ 0 < \pp \le 1 $,
then for some parameter choice $ \beta = \betast $
we have
%% with $\betast \to \delta $, we have
%
\begin{align}
& \normix{\ualpauxst - \ust} = \Landauno{\delta^{\frac{p}{p+\mya}}},
\quad
\norma{\ualpauxst - \ust}
= \Landauno{\delta}, \nonumber \\
& \normone{\ualpauxst - \ubar} = \Landauno{\delta^{-\frac{1-\pp}{\pp+\mya}}}
\as \delta \to 0.
\label{eq:auxel-2-n}
\end{align}

\item (Low order smoothness)
If $ \ust \in \domain(\log \G) $, then for some parameter choice $ \beta = \betast $
we have
\begin{align}
& \normix{\ualpauxst - \ust} = \Landauno{\loginv{\tfrac{1}{\delta}}},
\quad
\norma{\ualpauxst - \ust}
= \Landauno{\delta}, \nonumber \\
& \normone{\ualpauxst - \ubar} = \Landauno{\delta^{-\frac{1}{\mya}} \loginv[(1+\frac{1}{a})]{\tfrac{1}{\delta}}}
\as \delta \to 0.
\label{eq:auxel-2-c}
\end{align}
\end{mylist_indent}
For each of the three cases, the parameter choice $ \beta = \betast $ is specified in the proof.
%%respectively.
\end{lemma}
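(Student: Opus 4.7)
The strategy is to pick, in each of the three smoothness scenarios, an explicit parameter $\beta = \betast$ that balances the middle estimate $\norma{\ualpauxst - \ust} = \Landauno{\delta}$, and then read off the two remaining rates by substituting this choice into the bounds already established in Lemma~\ref{th:auxel_1}. To this end, I first rewrite the three functions appearing in \eqref{eq:auxel-a}--\eqref{eq:auxel-c} as
\begin{align*}
g_1(\beta) = \|S_\beta(\ust-\ubar)\|, \quad g_2(\beta)\beta^a = \|G^a S_\beta(\ust-\ubar)\|, \quad g_3(\beta)\beta^{-1} = \|R_\beta(\ust-\ubar)\|,
\end{align*}
so that the balancing condition takes the simple form $g_2(\betast)\,\betast^a \sim \delta$.

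In the H\"older case (ii), where $g_i(\beta) = \Landauno{\beta^p}$ for $i=1,2,3$, this condition yields $\betast \sim \delta^{1/(p+a)}$, and direct substitution produces $g_1(\betast) = \Landauno{\delta^{p/(p+a)}}$ together with $g_3(\betast)\betast^{-1} = \Landauno{\betast^{p-1}} = \Landauno{\delta^{-(1-p)/(p+a)}}$, matching~\eqref{eq:auxel-2-n}. In the unstructured case (i), where only $g_i(\beta) \to 0$ is known, I take $\betast = \delta^{1/a}$; then $g_2(\betast)\betast^a = \landauno{\delta}$, $g_1(\betast) \to 0$, and $g_3(\betast)\betast^{-1} = \landauno{\delta^{-1/a}}$, which gives precisely~\eqref{eq:auxel-2-a}.

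The main obstacle is the low-order case (iii), where $g_i(\beta) = \Landauno{\myphi(\beta)}$ with $\myphi(\beta) = 1/\log(1/\beta)$. Here the balancing condition reads $\myphi(\betast)\,\betast^a = \mychi{1}{a}(\betast) \sim \delta$, which determines $\betast$ implicitly through $\betast = \mychiinv{1}{a}(\delta)$. Inverting via item~\ref{it:low_oder_prep_c} of Section~\ref{low_order_preparations} gives $\betast \asymp \mychi{-1}{1}(\delta)^{1/a} = \bigl(\delta\log\tfrac{1}{\delta}\bigr)^{1/a}$. The two remaining rates then follow from items~\ref{it:low_oder_prep_d} and~\ref{it:low_oder_prep_e}: the relation $\myphi(\betast) \asymp \myphi(\delta)$ yields $g_1(\betast) = \Landauno{\myphi(\delta)}$, and a short computation gives $g_3(\betast)\betast^{-1} = \Landauno{\myphi(\betast)/\betast} = \Landauno{\delta^{-1/a}(\log\tfrac{1}{\delta})^{-(1+1/a)}}$, reproducing~\eqref{eq:auxel-2-c}. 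The only delicate point in the whole argument is to keep the logarithmic factors tracked carefully via the $\sim$/$\asymp$ relations collected in Section~\ref{low_order_preparations}; once those are in place, no further work on the operators $S_\beta, R_\beta$ beyond Lemma~\ref{th:auxel_1} is required.
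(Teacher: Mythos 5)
Your proposal is correct and follows essentially the same route as the paper: the same parameter choices $\betast = c\,\delta^{1/a}$, $\betast = c\,\delta^{1/(p+a)}$, and $\betast = c\,(\delta\log\tfrac{1}{\delta})^{1/a}$ (you arrive at the last one by explicitly inverting the balancing relation $\mychi{1}{a}(\betast)\sim\delta$ via item (c) of Section~\ref{low_order_preparations}, whereas the paper simply posits it), followed by substitution into Lemma~\ref{th:auxel_1} and the $\myphi$/$\mychi{\pm 1}{q}$ calculus for the logarithmic factors. The low-order computations for $\norma{\cdot}$ and $\normone{\cdot}$ match the paper's line for line.
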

\proof
We consider the following choices of $ \betast $:
\begin{mylist_indent}
\item In case of no explicit smoothness, one may choose
$ \betast = c \delta^{\lfrac{1}{\mya}} $.
\item In case of H\"older smoothness, one can choose
$ \betast = c \delta^{\frac{1}{\pp+\mya}} $.
\item In case of low order smoothness, we consider
%%%$ \betast = c \mychi{-\frac{1}{a}}{\frac{1}{a}}(\delta) $
$ \betast = c \kla{\delta \myphibrez{\delta}}^{1/\mya} $
for $ 0 < \delta < \delta_0 $, with
$ \delta_0 $ sufficiently small.
\end{mylist_indent}
Here, $ c > 0 $ denotes an arbitrary constant factor.
The first two statements follow as an easy consequence of Lemma \ref{th:auxel_1}.
The statement on the low order case is also a consequence of Lemma \ref{th:auxel_1}. In this case, however, below we consider some details.
For this purpose, we will make use of the notation
$ \myphi(t) = \myphib{t} $
introduced in Section~\ref{low_order_preparations}.
%
%%\begin{myenumerate}
%%\item
We first note that for some constant $ c_1 > 0 $, we have
% yyyy
\begin{align}
\myphi(\betast) \le c_1 \myphi(\delta), \quad 0 < \delta \le \delta_0,
\label{eq:auxel-2-ca}
\end{align}
%zzzz
which in fact follows easily from the two estimates
given in items \ref{it:low_oder_prep_d} and \ref{it:low_oder_prep_e}
%%%\refeq{phi-prop1} and \refeq{phi-prop2}.
introduced in Section \ref{low_order_preparations}.
%
%%%\begin{align*}
%%\norma{\ualpauxst - \ust}
%%%\myphi(\betast) \le c_2 \myphi(\mychi{-\frac{1}{a}}{\frac{1}{a}}(\delta))
%%%\le c_1 c_2 \myphi(\delta).
%%\end{align*}
%
%%\item
The three given estimates for the low order case are now consequences
of Lemma \ref{th:auxel_1} and estimate \refeq{auxel-2-ca}.
For $ \normix{\ualpauxst - \ust} $ this is immediate, and in addition,
we also obtain the following:
\begin{align*}
\norma{\ualpauxst - \ust} & \le c_2 \myphi(\betast) \betast^a
\le c_3 \myphi(\delta) \kla{\myphi(\delta)^{-1} \delta}^{a/a}
= c_3 \delta, \\
\normone{\ualpauxst - \ubar} & \le c_4 \myphi(\betast) \betast^{-1}
\le c_5 \myphi(\delta) \kla{\myphi(\delta) \delta^{-1}}^{1/\mya}
= c_5 \myphi(\delta)^{1+\frac{1}{\mya}} \delta^{-1/\mya},
\end{align*}
where $ c_2, \ldots, c_5 $ denote appropriately chosen constants.
This completes the proof of the lemma.
%%\end{myenumerate}
\proofend
\subsection{Proof of Theorem \ref{th:discrepancy_main_result}}
\label{proof_main:theorem}
This section is devoted to the proof of our main result, Theorem \ref{th:discrepancy_main_result}.
As a basic ingredient, we need to provide
reasonable estimates of the two terms
$ \norma{\upardeldel - \ust } $ and
$ \normone{\upardeldel - \ubar } $.
We start with the estimation of the former one.
%%We first provide an elementary estimate of $ \norma{\upardeldel - \ust } $.
%
\begin{lemma}
\label{th:upardel_norma_lemma}
\mainassump
We then have
\begin{align*}
\norma{\upardeldel - \ust } = \Landauno{\delta} \as \delta \to 0.
\end{align*}
\end{lemma}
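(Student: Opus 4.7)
The plan is first to bound $\normyps{F(\upardeldel) - \fst}$ using the discrepancy principle together with the noise model, and then to invoke the reverse nonlinearity estimate \eqref{eq:normequiv_b} to transfer that bound into the $\|\cdot\|_a$-norm. The key observation is that Algorithm \ref{th:discrepancy-def} in either of its two branches produces a parameter $\pardel$ satisfying
$$\Jdel{\upardeldel} \le b\,\delta.$$
In the trivial branch $\pardel = \infty$ this is the defining condition $\Jdel{\ubar} \le b\,\delta$ applied with $\upardeldel = \ubar$; in the non-trivial branch it is the left-hand side of \eqref{eq:parameter-choice}. Combining with the noise model \eqref{eq:noise} via the triangle inequality yields
$$\normyps{F(\upardeldel) - \fst} \le (b+1)\,\delta.$$

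Next I would verify that $\upardeldel$ belongs to the set $\Dset$ on which \eqref{eq:normequiv_b} is stated. In the trivial branch this is immediate from the standing assumption $\ubar \in \ix_1 \cap \DF$. In the non-trivial branch the Tikhonov functional $\pad{\cdot}$ attains a finite value at $\ubar$, so every minimizer must have finite penalty, which by the definition of $\Omega$ forces $\upardeldel \in \ix_1$; together with the minimization constraint $\upardeldel \in \DF$ this gives $\upardeldel \in \Dset$. For $\delta$ small enough that $(b+1)\delta \le c_1$, the hypothesis of inequality \eqref{eq:normequiv_b} in item \ref{it:normequiv_b} of Assumption \ref{th:main_assump} is fulfilled, and applying that inequality yields
$$\ca\,\norma{\upardeldel - \ust} \le \normyps{F(\upardeldel) - \fst} \le (b+1)\,\delta,$$
which is precisely the assertion of the lemma, with explicit constant $(b+1)/\ca$.

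There is no genuine obstacle in this argument: the statement is essentially a direct consequence of the misfit bound delivered by the discrepancy principle together with the one-sided reverse estimate \eqref{eq:normequiv_b}, which plays exactly the role of an a posteriori stability estimate in the scale-norm $\norma{\cdot}$. The only small points requiring attention are the treatment of the $\pardel = \infty$ case and the verification of $\upardeldel \in \Dset$; no information about the upper parameter $\pardelb$ from \eqref{eq:parameter-choice} is needed in this direction of the argument, and no smoothness of $\ust$ enters either.
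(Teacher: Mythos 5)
Your proof is correct and follows essentially the same route as the paper's: the chain $\ca\,\norma{\upardeldel-\ust}\le \normyps{F(\upardeldel)-\fst}\le \normyps{F(\upardeldel)-\fdelta}+\delta\le (b+1)\delta$, with \eqref{eq:normequiv_b} applicable for $\delta$ small enough. Your extra care about the $\pardel=\infty$ branch and the membership $\upardeldel\in\Dset$ only makes explicit what the paper leaves implicit.
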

\proof  From the choice of $ \pardel$ and estimate
\refeq{normequiv_b}, it follows that
\begin{align}
\label{eq:upardel_norma_lemma}
& \ca \norma{\upardeldel -\ust} \le \normyps{\myF{\upardeldel} - \fst } \le
\normyps{\myF{\upardeldel} - \fdel } + \delta \le (b+1)\delta
\end{align}
for $ \delta > 0 $ small enough. Note that the upper bound presented at the end of
\refeq{upardel_norma_lemma} guarantees that
estimate \refeq{normequiv_b} is applicable with $ u = \upardeldel $
for $ \delta $ small enough. This concludes the proof.
\proofend
Our next goal is to provide appropriate estimates for
$ \normone{\upardeldel - \ubar } $. This requires some preparations.
For this purpose, we recall the definition from \refeq{mykapdef},
this is
%% $\er $ is defined as follows:
%%\begin{align*}
$ \mykap = \frac{1}{\fab} $.
%%\end{align*}
%
%%In addition, we use the notation
%
%%\begin{align*}
%%\mykap \defeq \frac{1}{\fab}.
%%\end{align*}
%
\begin{lemma}
\label{th:upardel_lemma}
\mainassump
There exists some $ \para_0 > 0 $ such that for $ 0 < \para \le \para_0 $ and each $\delta>0$, we have
\begin{align*}
\max\{\normyps{\myF{\upardel} - \fdel }, \, \para^{1/\myr} \normone{\upardel-\ubar}\}
\le \myfa(\para)\para^{\mykap a } + \er \delta,
\end{align*}
where the constant $ \er $ is given by \refeq{er}. In addition,
$\myfa(\para)$ is a bounded function which satisfies the following:
\begin{mylist_indent}
\item (No explicit smoothness)
If $ \ust \in \overline{\R(G)} $, then $ \myfa(\para) \to 0 $ as $ \para \to 0 $.
\item (H\"older smoothness)
If $ \ust \in \ix_\pp $ for some $ 0 < \pp \le 1 $, then
$ \myfa(\para) = \Landauno{\para^{\mykap p}} $ as $ \para \to 0 $.
\item (Low order smoothness)
If $ \ust \in \domain(\log \G) $, then
$ \myfa(\para) = \Landauno{\loginv{\frac{1}{\para}}} $ as $ \para \to 0 $.
\end{mylist_indent}
\end{lemma}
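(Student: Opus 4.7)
The plan is to compare $\upardel$ against the auxiliary element $\ualpaux$ from \eqref{eq:uaux-def}, with the parameter $\parb$ tuned to a suitable power of $\para$ that balances the two relevant terms. Concretely, I would take $\parb = c\,\para^{\mykap}$ with some convenient constant $c>0$ and $\mykap = \frac{1}{r(a+1)}$ as in \eqref{eq:mykapdef}, so that $\parb^a$ and $\para^{1/r}\parb^{-1}$ both scale like $\para^{\mykap a}$. For $\para$ small, the smallness of $\parb$, the convergence $\ualpaux \to \ust$ in $\ix$ from Lemma~\ref{th:auxel_1} (applicable since $\ust \in \overline{\R(G)}$, which is implicit in the oversmoothing framework and in any case forced by the smoothness chain \eqref{eq:chain2} in the three specialized cases), together with $\ust$ being an interior point of $\DF$, ensure $\ualpaux \in \Dset$. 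In addition, $\norma{\ualpaux - \ust} = g_2(\parb)\parb^a \to 0$, so that condition \eqref{eq:normequiv_a} applies at $u = \ualpaux$ once $\para_0$ is chosen small enough.

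The Tikhonov minimizing property $\pad{\upardel} \le \pad{\ualpaux}$, combined with the noise level \eqref{eq:noise}, the nonlinearity estimate \eqref{eq:normequiv_a}, and the identities of Lemma~\ref{th:auxel_1}, yields
\begin{equation*}
\normyps{\myF{\upardel} - \fdelta}^r + \para\,\normone{\upardel - \ubar}^r \;\le\; X^r + Y^r,
\end{equation*}
where $X := \cb\, g_2(\parb)\,\parb^a + \delta$ and $Y := \para^{1/r}\, g_3(\parb)\,\parb^{-1}$. Since the left-hand side dominates the maximum of its two summands, taking the $r$-th root gives
\begin{equation*}
\max\bigl\{\normyps{\myF{\upardel} - \fdelta},\; \para^{1/r}\,\normone{\upardel - \ubar}\bigr\} \;\le\; (X^r + Y^r)^{1/r} \;\le\; \er(X + Y),
\end{equation*}
where the last inequality is the elementary bound $(x^r+y^r)^{1/r} \le \er(x+y)$ valid for all $x,y\ge 0$ and $r>0$ with $\er$ defined in \eqref{eq:er}.

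Inserting the balance $\parb = c\,\para^{\mykap}$ collapses $\er(X+Y)$ into $\er\delta$ plus a quantity proportional to $(g_2(\parb) + g_3(\parb))\,\para^{\mykap a}$. Setting
\begin{equation*}
\myfa(\para) := \er\bigl(\cb\, c^{a}\, g_2(c\,\para^{\mykap}) + c^{-1}\, g_3(c\,\para^{\mykap})\bigr)
\end{equation*}
then delivers the desired estimate. Boundedness of $\myfa$ follows from that of $g_2, g_3$ in Lemma~\ref{th:auxel_1}, and the three smoothness conclusions (no explicit smoothness $\Rightarrow \myfa(\para)\to 0$; H\"older $\Rightarrow \myfa(\para) = \Landauno{\para^{\mykap p}}$; low-order $\Rightarrow \myfa(\para) = \Landauno{\loginv{1/\para}}$) are inherited directly from the corresponding decay rates of $g_2$ and $g_3$, invoking $\loginv{1/(c\,\para^{\mykap})} \asymp \loginv{1/\para}$ for the low-order case.

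The main technical point is the bookkeeping around $\er$: the target bound places $\er$ only on the noise term $\delta$, whereas $(X^r+Y^r)^{1/r} \le \er(X+Y)$ attaches $\er$ to both $X$ and $Y$. This is harmless because the $\er$ factor on the $\parb$-dependent terms can be absorbed into the bounded function $\myfa$. A secondary concern is feasibility of $\ualpaux$ in $\DF$ uniformly on $(0,\para_0]$; this is secured by choosing $\para_0$ so small that the corresponding $\parb = c\,\para_0^{\mykap}$ keeps $\ualpaux$ inside a $\DF$-neighborhood of $\ust$ on which \eqref{eq:normequiv_a} applies, which rests crucially on the interior-point property of $\ust$.
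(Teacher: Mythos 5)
Your proposal is correct and follows essentially the same route as the paper's own proof: comparison with the auxiliary element $\ualpaux$ at the balanced choice $\parb=\para^{\mykap}$ (the paper simply takes your constant $c=1$), the Tikhonov minimizing property, the nonlinearity condition \refeq{normequiv_a} together with the identities of Lemma~\ref{th:auxel_1}, and the elementary $\er$-inequality, with the $\er$-factor on the non-noise terms absorbed into $\myfa$ exactly as you note. Your remarks on feasibility of $\ualpaux$ in $\Dset$ via the interior-point property and on the harmlessness of the $\er$ bookkeeping match the paper's treatment.
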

%
%%In addition, the constant $\er $ is defined as follows:
%%$$  \er = \left\{\begin{array}{ll} 1,
%%& \textup{if } r \ge 1, \\
%%2^{-1+1/r}& \textup{otherwise}.
%%\end{array}\right. $$
%
\proof
Let $ \ust \in \overline{\R(G)} $.
For auxiliary elements of the form \refeq{uaux-def},
with saturation $ \pp_0 \ge 1 + a $, we choose
\begin{align}
\parb=\parb(\para) = \para^{\mykap}.
\label{eq:parb-def}
\end{align}
For $ \para > 0 $ small enough, say $ 0 < \para \le \para_0 $, we have
$ \ualpaux \in \Dset $ because of Lemma~\ref{th:auxel_1} and since moreover $\ust$ is assumed to be an interior point of $\DF$. We thus have
\begin{align}
& \kla{\normypsqua{\myF{\upardel} - \fdel } + \para \normonequa{\upardel-\ubar}}^{1/\myr}
\le \kla{\normypsqua{\myF{\ualpaux} - \fdel } + \para \normonequa{\ualpaux-\ubar}}^{1/\myr}
\nonumber \\
& \quad \le e_r (\normyps{\myF{\ualpaux} - \fdel } + \para^{1/\myr} \normone{\ualpaux-\ubar}) \nonumber \\
& \quad
\le e_r(\normyps{\myF{\ualpaux} - \fst } + \para^{1/\myr} \normone{\ualpaux-\ubar} + \delta).
\label{eq:upardel_lemma_a}
\end{align}
%
%%%tttt
The first term on the \rhs of the latter estimate can be written as
\begin{align}
\normyps{\myF{\ualpaux} - \fst }
\le \cb \norma{\ualpaux - \ust}
= \cb g_2(\parb)\parb^{a}
= \cb g_2(\para^{\mykap})\para^{\mykap a}.
\label{eq:upardel_lemma_b}
\end{align}
%
%%%where $ g_2 $ corresponds to \refeq{auxel-b}.
The estimate in \refeq{upardel_lemma_b} is a consequence of estimate \refeq{normequiv_a}, which is applicable with $ u = \ualpaux $ for $ \para $ small enough, and without loss of generality we may assume that small enough means $ \para \le \para_0 $ by choosing $ \para_0 $ sufficiently small in the beginning. The first identity in \refeq{upardel_lemma_b} follows from representation \refeq{auxel-b} in Lemma \ref{th:auxel_1}.

The second term on the \rhs of the estimate \refeq{upardel_lemma_a}
can be represented as follows:
\begin{align*}
\para^{1/r} \normone{\ualpaux-\ubar}
=
\para^{1/r} g_3(\parb)\parb^{-1}
= g_3(\para^{\mykap})\para^{\mykap a},
\end{align*}
based on \refeq{auxel-c} of Lemma \ref{th:auxel_1}.
As a consequence, the estimate of Lemma \ref{th:upardel_lemma} holds, if the
function $ \myfa $ is chosen as
%%This yields the function
$$ \myfa(\para):=  \er(\cb g_2(\para^\mykap)+g_3(\para^\mykap)) \for \para \le \para_0 \,. $$
The asymptotic behavior of the function $ \myfa $ stated in the lemma is an immediate consequence of
Lemma~\ref{th:auxel_1}.
This completes the proof of the lemma.
\proofend
As a consequence of the preceding lemma, we can derive reasonable lower bounds for the regularizing parameter $ \parst $ obtained by the discrepancy principle, which actually affects the stability
of the method.
%xxxx
\begin{corollary}
\label{th:upardel_cor}
\mainassump Let the parameter $ \para = \pardel $ be chosen according to the discrepancy principle.
%%Then we have
%%%There exist finite positive constants $ \delta_0 $
%%%such that for $0 \le \delta \le \delta_0$ the following holds:
%
\begin{mylist_indent}
\item (No explicit smoothness)
If $ \ust \in \overline{\R(G)} $, then
%%%$ \pardel^{-\mykap} = \landauno{\delta^{-\lfrac{1}{\mya}}} \to 0 $ as $ \delta \to 0 $.
$ \pardel^{-\mykap \mya} = \landauno{\delta^{-1}} $ as $ \delta \to 0 $.
\item (H\"older smoothness)
If $ \ust \in \ix_\pp $ for some $ 0 < \pp \le 1 $, then
$ \pardel^{-\mykap(\pp + \mya)} = \Landauno{\delta^{-1}} $ as $ \delta \to 0 $.
\item (Low order smoothness)
If $ \ust \in \domain(\log \G) $, then
$ \pardel^{-\mykap\mya} = \Landauno{\delta^{-1}\loginv{\tfrac{1}{\delta}}} $ as $ \delta \to 0 $.
\end{mylist_indent}
\end{corollary}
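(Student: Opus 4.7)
The plan is to exploit the right-hand discrepancy inequality $\myb\delta \le \Jdel{\upddb}$ from Algorithm \ref{th:discrepancy-def}, combined with the upper bound supplied by Lemma \ref{th:upardel_lemma} at the parameter $\pardelb$. Restricting to $\delta$ small enough that $\pardelb \le \para_0$ (in the complementary regime $\pardelb > \para_0$, the quantity $\pardel^{-\mykap \mya}$ is itself bounded and all three conclusions are immediate), the lemma gives $\Jdel{\upddb} \le \myfa(\pardelb)\pardelb^{\mykap \mya} + \er \delta$, so subtracting $\er \delta$ from both sides of the discrepancy inequality produces the master estimate
\[
(\myb - \er)\delta \le \myfa(\pardelb)\,\pardelb^{\mykap \mya}.
\]
From this I would extract a lower bound on $\pardelb^{\mykap \mya}$ in each smoothness situation, and then transfer that bound to $\pardel$ using $\pardelb \le \myc\,\pardel$, which costs only a factor of $\myc^{\mykap \mya}$ or $\myc^{\mykap(p+\mya)}$.

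For cases (a) and (b), a case split on $\pardelb$ suffices. In case (a), given any $\varepsilon > 0$, pick $\para_1 > 0$ so that $\myfa(\para) < \varepsilon(\myb - \er)$ for $0 < \para \le \para_1$. If $\pardelb \le \para_1$, the master estimate gives $\delta\,\pardelb^{-\mykap \mya} \le \myfa(\pardelb)/(\myb - \er) < \varepsilon$; otherwise $\pardelb^{-\mykap \mya} \le \para_1^{-\mykap \mya}$ is bounded and $\delta\,\pardelb^{-\mykap \mya} \to 0$ trivially, which together establish $\pardelb^{-\mykap \mya} = \landauno{\delta^{-1}}$. In case (b), substituting $\myfa(\para) = \Landauno{\para^{\mykap p}}$ into the master estimate yields $(\myb - \er)\delta \le c\,\pardelb^{\mykap(p+\mya)}$ whenever $\pardelb$ lies below the threshold on which the H\"older bound holds; outside that threshold the claim is again immediate by boundedness.

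The low order case (c) is the main obstacle, since $\myfa(\para) = \Landauno{1/\log(1/\para)}$ only gives $\pardelb^{\mykap \mya} \ge c_5\,\delta \log(1/\pardelb)$, and $\log(1/\pardelb)$ is not a priori comparable to $\log(1/\delta)$. I would split at the threshold $\pardelb = \delta^{1/(2\mykap \mya)}$. In the regime $\pardelb \le \delta^{1/(2\mykap \mya)}$, one has $\log(1/\pardelb) \ge \log(1/\delta)/(2\mykap \mya)$, so the master estimate immediately yields $\pardelb^{\mykap \mya} \ge (c_5/(2\mykap \mya))\,\delta \log(1/\delta)$, which is exactly the desired bound. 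In the complementary regime $\pardelb > \delta^{1/(2\mykap \mya)}$, one has $\pardelb^{\mykap \mya} > \delta^{1/2}$, so that $\pardelb^{-\mykap \mya}/(\delta^{-1}\log^{-1}(1/\delta)) < \delta^{1/2}\log(1/\delta) \to 0$, and the desired $O$-bound even holds with little-$o$. Propagating the three resulting bounds from $\pardelb$ to $\pardel$ then completes the proof.
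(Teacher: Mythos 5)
Your proof is correct, and its skeleton coincides with the paper's: both arguments extract the master inequality $(\myb-\er)\delta \le \myfa(\pardelb)\,\pardelb^{\mykap\mya}$ by combining the right-hand discrepancy inequality $\myb\delta\le\Jdel{\upddb}$ with the upper bound of Lemma~\ref{th:upardel_lemma}, dispose of the regime where $\pardelb$ stays away from zero (including $\pardel=\infty$) as a degenerate case, and transfer the resulting lower bound from $\pardelb$ to $\pardel$ via $\pardelb\le\myc\,\pardel$; cases (a) and (b) are then handled identically. The one genuine divergence is the low order case: the paper rewrites the master inequality as $c_1\delta\le\mychi{1}{\mykap\mya}(\pardelb)$ and inverts it using the monotonicity of $\mychi{1}{q}$ and the asymptotics of $\mychiinv{1}{q}$ prepared in Section~\ref{low_order_preparations} (items (a), (c), (e)), arriving at $\pardelb\ge c_4(\delta\log\tfrac{1}{\delta})^{1/(\mykap\mya)}$, whereas you avoid the inverse-function machinery entirely by splitting at the threshold $\pardelb=\delta^{1/(2\mykap\mya)}$, below which $\log(1/\pardelb)$ is comparable to $\log(1/\delta)$ and the master inequality gives the claim directly, and above which the claim holds with room to spare since $\delta^{1/2}\log(1/\delta)\to 0$. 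Your route is more elementary and self-contained; the paper's inversion yields the asymptotically natural constant, but for an $O$-statement the extra factor $2\mykap\mya$ your split incurs is immaterial, so both arguments are equally valid here.
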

\proof
%%%Without loss of generality, we may assume that $ \pardel \le \alpha_0 $ holds, since
We first note that parameters $ \pardel $ which stay away from zero
can easily be treated in each of the three cases.
%%that we do not cause any problems in the analysis.
Note also that this is a related to a degenerated case, and it includes the case $ \pardel = \infty $.

In the following, we thus may assume that $ \pardel \le \alpha_0/c $
and thus $ \pardelb \le \alpha_0 $ hold, where
$ \alpha_0 $ is given by Lemma \ref{th:upardel_lemma}, and
the constant $ c $ and the parameter $ \pardelb $ are introduced by the discrepancy principle \refeq{parameter-choice}.
Lemma \ref{th:upardel_lemma} then implies
%%%\begin{align*}
$ b \delta  \le \normyps{\myF{\upardeldelb} - \fdel }
\le \myfa(\pardelb)\pardelb^{\mykap a } + \er \delta $
and thus
\begin{align}
(b-\er) \delta  \le \myfa(\pardelb)\pardelb^{\mykap \mya}.
\label{eq:upardel_cor_a}
\end{align}
The statements of the corollary for the two cases ``no explicit smoothness''
 and ``H\"older smoothness'' now easily follow from the properties on
the function $\myfa $ presented in Lemma \ref{th:upardel_lemma},
respectively.
%%%
Low order smoothness is considered next.
In this case, without loss of generality we may assume that $ \para_0 < 1 $.
%%% is chosen so small that
%%$ \alpha_0 < 1 $.
%%%$ \para_0 < 1/c $, where $ c $ denotes the constant from
%%%\refeq{parameter-choice}.
%%
Estimate \refeq{upardel_cor_a} then means
\begin{align*}
c_1 \delta  \le \myphib{\pardelb} \pardelb^{\mykap \mya} = \mychi{1}{\mykap \mya}(\pardelb),
\end{align*}
where $ c_1 >0 $ denotes a constant,
and the notation from Section \ref{low_order_preparations} is used again.
From item \ref{it:low_oder_prep_a}
%%%and \ref{it:low_oder_prep_e}
of that section, we now easily obtain
$ \mychiinv{1}{\mykap \mya}(c_1 \delta) \le \pardelb $,
with $ \delta > 0 $ small enough.
%
%%%qqqqq
This provides the basis for the following estimates, which also utilize items~\ref{it:low_oder_prep_c} and \ref{it:low_oder_prep_e} from Section \ref{low_order_preparations}:
\begin{align*}
c \pardel & \ge \pardelb \ge \mychiinv{1}{\mykap \mya}(c_1 \delta)
\ge c_2 \mychi{-1}{1}(c_1 \delta)^{\lfrac{1}{(\mykap \mya)}}
\\
& = c_3 (\delta \myphibrez{c_1 \delta})^{\lfrac{1}{(\mykap \mya)}}
\ge c_4 (\delta \myphibrez{\delta})^{\lfrac{1}{(\mykap \mya)}},
\end{align*}
where $ c_2, c_3 $ and $ c_4 $ denote appropriately chosen finite constants, and $ \delta $ is again sufficiently small.
A simple rearrangement yields the statement on low order smoothness.
\proofend
Below, we present suitable estimates for
$ \normone{\upardeldel - \ubar } $.
\begin{corollary}
\label{th:upardel_cor_b}
\mainassump Let the parameter $ \para = \pardel $ be chosen according to the discrepancy principle. Then
%%%there exist a finite positive constant $ \delta_0 $
%%%such that for $0 \le \delta \le \delta_0$
the following holds:
\begin{mylist_indent}
\item (No explicit smoothness)
If $ \ust \in \overline{\R(G)} $, then
$ \normone{\upardelst - \ubar} = \landauno{\delta^{-\lfrac{1}{a}}} $
as $ \delta \to 0 $.
\item (H\"older smoothness)
If $ \ust \in \ix_\pp $ for some $ 0 < \pp \le 1 $, then
$ \normone{\upardelst - \ubar} = \Landauno{\delta^{-\frac{1-\pp}{\pp+\mya}}} $
as $ \delta \to 0 $.
%xxx
\item (Low order smoothness)
If $ \ust \in \domain(\log \G) $, then
$ \normone{\upardelst - \ubar} = \Landauno{\delta^{-\frac{1}{\mya}} \loginv[(1+\frac{1}{a})]{\tfrac{1}{\delta}}} $
as $ \delta \to 0 $.
\end{mylist_indent}
\end{corollary}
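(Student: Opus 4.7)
The plan is to combine Lemma~\ref{th:upardel_lemma}, evaluated at $\para=\pardel$, with the lower bounds on $\pardel$ from Corollary~\ref{th:upardel_cor}. Reading off the second entry of the $\max$ on the left-hand side of Lemma~\ref{th:upardel_lemma} yields $\pardel^{1/r}\normone{\upardelst-\ubar}\le \myfa(\pardel)\pardel^{\mykap a}+\er\delta$, so division by $\pardel^{1/r}$ delivers the master estimate
\begin{equation}\label{eq:cor-master}
\normone{\upardelst-\ubar}\le \myfa(\pardel)\pardel^{\mykap a-1/r}+\er\delta\pardel^{-1/r}.
\end{equation}
The degenerate cases, where $\pardel=\infty$ or where $\pardel$ stays bounded away from zero, are disposed of trivially (in the former, $\upardelst=\ubar$); in the remaining regime one has $\pardel\le\para_0$ for $\delta$ small enough, so that Lemma~\ref{th:upardel_lemma} applies.

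Next, in each of the three smoothness regimes I would substitute into~\eqref{eq:cor-master} both the decay rate of $\myfa(\pardel)$ supplied by Lemma~\ref{th:upardel_lemma} and the lower bound on $\pardel$ supplied by Corollary~\ref{th:upardel_cor}, the latter converted into upper bounds on $\pardel^{-1/r}$ and, when $\mykap a-1/r<0$, on $\pardel^{\mykap a-1/r}$. For H\"older smoothness, $\myfa(\pardel)=\Landauno{\pardel^{\mykap p}}$ combined with $\pardel^{-\mykap(p+a)}=\Landauno{\delta^{-1}}$ yields $\Landauno{\delta^{-(1-p)/(p+a)}}$ in both terms. For low-order smoothness, $\myfa(\pardel)=\Landauno{\loginv{\tfrac{1}{\pardel}}}$ together with the equivalence $\loginv{\tfrac{1}{\pardel}}\asymp\loginv{\tfrac{1}{\delta}}$ (a consequence of the polynomial-up-to-log relation between $\pardel$ and $\delta$ from Corollary~\ref{th:upardel_cor}, combined with the elementary facts collected in Section~\ref{low_order_preparations}) produces the required $\loginv[(1+1/a)]{\tfrac{1}{\delta}}$ factor. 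In the no-smoothness case, the $\landauno{\delta^{-1}}$ bound from Corollary~\ref{th:upardel_cor}, together with $\myfa(\pardel)\to 0$, propagates through both terms of~\eqref{eq:cor-master} to yield $\landauno{\delta^{-1/a}}$.

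The main obstacle is the careful asymptotic bookkeeping. In particular, one must verify that the little-oh survives through each term of~\eqref{eq:cor-master} in the no-smoothness case, justify the equivalence $\loginv{\tfrac{1}{\pardel}}\asymp\loginv{\tfrac{1}{\delta}}$ so that logarithms in $\pardel$ may legitimately be replaced by logarithms in $\delta$, and cope with the fact that the exponent $\mykap a-1/r$ is negative, so that $\pardel^{\mykap a-1/r}$ blows up as $\pardel\to 0$ and must be tamed by the decay of $\myfa(\pardel)$. A useful sanity check is the comparison with Lemma~\ref{th:auxel_2}: the rates obtained for $\normone{\upardelst-\ubar}$ must coincide with those already derived for $\normone{\ualpauxst-\ubar}$.
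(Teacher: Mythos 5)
Your proposal follows the paper's proof essentially verbatim: reading off the second entry of the max in Lemma~\ref{th:upardel_lemma}, dividing by $\pardel^{1/r}$ (the paper simplifies the resulting exponent via $\mykap a - 1/r = -\mykap$), and then feeding in the lower bounds on $\pardel$ from Corollary~\ref{th:upardel_cor} together with the decay of $\myfa$ is exactly the argument given there, and your case-by-case substitutions (including the little-oh propagation and the $\log(1/\pardel)\asymp\log(1/\delta)$ equivalence handled via Section~\ref{low_order_preparations}) yield the stated rates. The only step you treat more lightly than the paper is the regime where $\pardel$ stays bounded away from zero: there the paper shows boundedness of $\normone{\upardelst-\ubar}$ by comparing the Tikhonov functional at $\upardelst$ with its value at $\ubar$, which is the short minimizer-comparison argument your word \emph{trivially} is standing in for.
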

%xxxx
\proof
For parameters $ \pardel $ staying away from the origin, say $ \pardel \ge \alpha_1 > 0 $, the statements of the corollary follow immediately, since $ \normone{\upardelst - \ubar} $ stays bounded then,
%% in each of the three case
as can be seen from the following computations:
\begin{align*}
\alpha_1^{1/\myr} \normone{\upardelst - \ubar}
& \le \pardel^{1/\myr} \normone{\upardelst - \ubar}
\le \pad{\upardelst}^{1/\myr} \le
\pad{\ubar}^{1/\myr} \\
& = \normyps{\myF{\ubar} - \fdel }
\le \normyps{\myF{\ubar} - \fst } + \delta.
\end{align*}
%
%%%Without loss of generality, we may assume that $ \pardel \le \alpha_0 $ holds, since
%%%For parameters $ \pardel > \alpha_0 $ -- a degenerate case which includes $ \pardel = \infty $ -- the statements of the corollary follow  xxxxx
%
Therefore, in the following we may assume
$ \pardel \le \alpha_0 $, where $ \alpha_0 $ is given by Lemma \ref{th:upardel_lemma}.
The same lemma then implies
$ \parst^{1/\myr} \normone{\upardeldel-\ubar}
\le \myfa(\parst)\parst^{\mykap a } + \er \delta $ and thus
\begin{align}
\normone{\upardeldel-\ubar}
\le \myfa(\parst)\,\parst^{-\mykap } + \er \frac{\delta}{\parst^{1/\myr}},
\label{eq:upardel_cor_b1}
\end{align}
where we make use of the identity $ \mykap \mya - \frac{1}{r} = -\mykap $.
The statements of the corollary now follow by considering the two terms on the \rhs of
\refeq{upardel_cor_b1} separately, respectively.
For the two cases ``no explicit smoothness'' and ``H\"older smoothness'', this follows from the corresponding estimates from Lemma \ref{th:upardel_lemma} and Corollary \ref{th:upardel_cor}. The proof is straightforward, and details thus are omitted here.

Below we present some details for the low order smoothness case.
In this case, the estimate of
the function $ \myfa $ given by Lemma \ref{th:upardel_lemma}
yields
%%% \refeq{upardel_cor_b1}
%
\begin{align}
\normone{\upardeldel-\ubar}
\le c_1 \myphib{\parst}\parst^{-\mykap } + \er \frac{\delta}{\parst^{1/\myr}},
\label{eq:upardel_cor_b2}
\end{align}
%
%uuuu
for some suitable finite constant $ c_1 $.
%%%Both terms on the \rhs of \refeq{upardel_cor_b2} can be easily suitable estimated by using the lower estimate of $ \parst $
We now  can proceed
%%%with the \rhs of \refeq{upardel_cor_b2} can be easily suitable estimated
by utilizing the lower estimate of $ \parst $
given by Corollary \ref{th:upardel_cor}, i.\,e.,
\begin{align}
c_2 ( \delta \mylog{\tfrac{1}{\delta}})^{1/(\mykap\mya)}
\le \pardel
\for 0 < \delta \le \delta_1,
\label{eq:upardel_cor_b3}
\end{align}
with some constant $ c_2 $, and $ \delta_1 $ is chosen small enough.
From \refeq{upardel_cor_b3}, the second term on the \rhs of \refeq{upardel_cor_b2} can be suitable estimated in a straightforward manner, and we omit the details.
%
%%the lower estimate of $ \parst $
%
%%%$ \pardel^{-\mykap\mya} = \Landauno{\delta^{-1}\loginv{\tfrac{1}{\delta}}} \to 0 $ as $ \delta \to 0 $.
We next consider the first term on the \rhs of \refeq{upardel_cor_b2}.
Without loss of generality, in the following we may assume that $ \alpha_0 < 1 $ considered in the beginning of the proof is chosen so small such that
%$ \mychi{1}{-\mykap}(\para) = \mychi{-1}{\mykap}^{-1}(\para) $
the function
$ \kla{\mychi{-1}{\mykap}(\para)}^{-1} =
\myphib{\para}\para^{-\mykap } $
%%% on the \rhs of \refeq{upardel_cor_b3}
%% $ \mychi{-1}{\mykap} $
is monotonically decreasing for $ 0 < \para \le \para_0 $, cf.~item~\ref{it:low_oder_prep_b} in Section~\ref{low_order_preparations}.
From \refeq{upardel_cor_b2}, \refeq{upardel_cor_b3} and items \ref{it:low_oder_prep_b} in Section \ref{low_order_preparations}, we then obtain
\begin{align*}
%%& \myphib{\parst}\parst^{-\mykap } \le c_1 s_1 s_2, \\
& \myphib{\parst}\parst^{-\mykap } \le
c_3 \kla{\delta \mylog{\tfrac{1}{\delta}}}^{-1/\mya} \sigma,
%%%\quad \textup{ with }
\quad
\sigma \defeq
(-\mylog\kla{{c_2 ( \delta \mylog{\delta})^{1/(\mykap\mya)}}})^{-1},
%%= \myphi{\kla{c ( \delta \mylog{\tfrac{1}{\delta}})^{1/(\mykap\mya)}}},
%%%= \myphi\kla{{c \mychi{-\frac{1}{\mykap\mya}}{\frac{1}{\mykap\mya}}(\delta)}},
%%\quad s_2 \defeq \kla{\delta \mylog{\tfrac{1}{\delta}}}^{-1/\mya}.
%
%%\kla{c ( \delta \mylog{\tfrac{1}{\delta}})^{1/(\mykap\mya)}}^{-\mykap }
%%c_1 \myphi(\parst)\parst^{-\mykap }
%%= c_2 \mychirez{-1}{\mykap}{\parst}.
%%%\label{eq:upardel_cor_b4}
\end{align*}
for some constant $ c_3 $.
From items \ref{it:low_oder_prep_d} and \ref{it:low_oder_prep_e} in Section
\ref{low_order_preparations}, it follows that
\begin{align*}
%%\sigma = \myphi\kla{{c \mychi{-\frac{1}{\mykap\mya}}{\frac{1}{\mykap\mya}}(\delta)}}
\sigma = \myphi\kla{c_2 \mychi{-1}{1}(\delta)^{1/(\mykap\mya)}}
\le c_4 \myphi(\delta) = c_4 \myphib{\delta},
\end{align*}
for some constant $ c_4 $.
The statement in the third item of the corollary now follows.
%%%\begin{align}
%%%\mychirez{-1}{\mykap}{\parst}
%%& \le \mychirez{-1}{\mykap}{c \mychi{-1}{1}(\delta)^{1/\mykap \mya}}
%%& \le \mychirez{-1}{\mykap}{c ( \delta \mylog{\tfrac{1}{\delta}})^{1/(\mykap\mya)})}
%%%\end{align}
%
%%% zzz
\proofend
%
%
\begin{comment}
\begin{corollary}
\label{th:upardel_cor}
\mainassump
There exist finite positive constants $ \para_0, \delta_0 $
and $K_2$
such that for $ 0 < \para \le \para_0 $ and each $0 \le \delta \le \delta_0$, we have
%
$$ \norma{\upardel -\ust} \le f_3(\para)\para^{\mykap} + K_2 \delta.$$
Here $f_3(\para), 0 < \para \le \para_0 $, is a bounded function which satisfies the following:
%
\begin{mylist_indent}
\item (No explicit smoothness)
If $ \ust \in \overline{\R(G)} $, then $ f_3(\para) \to 0 $ as $ \para \to 0 $.
\item (H\"older smoothness)
If $ \ust \in \ix_\pp $ for some $ 0 < \pp \le 1 $, then
$ f_3(\para) = \Landauno{\para^{\mykap p}} $ as $ \para \to 0 $.
\item (Low order smoothness)
If $ \ust \in \domain(\log \G) $, then
$ f_3(\para) = \Landauno{\loginv{\frac{1}{\para}}} $ as $ \para \to 0 $.
\end{mylist_indent}
\end{corollary}
%
\proof
Let $ \para $ and $ \delta $ be small enough, say
$ 0 < \para \le \para_0 $ and $ 0 < \delta \le \delta_0 $.
From estimate \refeq{normequiv_b} and Lemma \ref{th:upardel_lemma},
it then follows
%
\begin{align*}
& \ca \norma{\upardel -\ust} \le \normyps{\myF{\upardel} - \fst } \le
\normyps{\myF{\upardel} - \fdel } + \delta
\le f_2(\para)\,\para^{\mykap a} + (1+\er)\delta.
\end{align*}
%
The assertion of the corollary now follows by setting $f_3(\para):=\tfrac{f_2(\para)}{\ca}$ and $K_2:=\tfrac{1+\er}{\ca}.$
\proofend
\end{comment}
%
We are now in a position to present a proof of the main result of this paper.
\begin{proof}[Proof of Theorem \ref{th:discrepancy_main_result}]
We start with an elementary error estimate $ \norm{\upardeldel -\ust} $
%%%can be estimated as follows
utilizing the auxiliaries,
%% the following series of error estimates.
%%% in combination with
%%\refeq{auxel-a} from Lemma~\ref{th:auxel_1}, we obtain
%
\begin{align}
\normix{\upardeldel -\ust}\le
\normix{\upardeldel -\ualpauxst}+\normix{\ualpauxst-\ust},
%%%=\normix{\upardel -\ualpaux}+g_1(\para^\mykap),
\label{eq:upardel_prop-a}
\end{align}
where $ \betast $ is given by
Lemma~\ref{th:auxel_2}.
%%The second term
The error of the auxiliaries
on the \rhs of \refeq{upardel_prop-a}
can be properly estimated using Lemma \ref{th:auxel_2}.
Below we consider the term $ \normix{\upardel -\ualpaux} $ in more detail.
From the interpolation inequality \refeq{interpol2},
it follows
\begin{align}
\label{eq:upardel_prop-b}
\normix{\upardeldel -\ualpauxst} & \le
c_1 \norma{\upardeldel -\ualpauxst}^{\lfrac{1}{(a+1)}}
\normone{\upardeldel -\ualpauxst}^{\lfrac{a}{(a+1)}},
\end{align}
where $ c_1 $ denotes some finite constant not depending on $ \delta $.
The first term on the \rhs of estimate \eqref{eq:upardel_prop-b}
can be estimated by using
Lemmas~\ref{th:auxel_2} and
\ref{th:upardel_norma_lemma}.
%% Corollary~\ref{th:upardel_cor} and Lemma~\ref{th:auxel_1} in the following manner.
Precisely, we find
%% with
%%$$f_4(\para):=f_3(\para)+g_2(\para^\mykap), \qquad
%%f_5(\para):=f_2(\para)+g_1(\para^\mykap)$$
%
the estimates
\begin{align*}
\norma{\upardelst -\ualpauxst}
& \le
\norma{\upardelst -\ust} + \norma{\ualpauxst - \ust}
= \Landauno{\delta} \as \delta \to 0,
%%% \le f_4(\alpha)\para^{\mykap a}+K_2\delta,\\
\end{align*}
so that estimate \refeq{upardel_prop-b} simplifies to
\begin{align}
\label{eq:upardel_prop-c}
\normix{\upardeldel -\ualpauxst} & \le
c_2 \delta^{\lfrac{1}{(a+1)}}
\normone{\upardeldel -\ualpauxst}^{\lfrac{a}{(a+1)}},
\end{align}
where $ c_2 $ denotes some finite constant independent of $ \delta $.
The last factor on the \rhs of \refeq{upardel_prop-c} is estimated next, and for this purpose, we make use of the following elementary estimate,
%%The second term on the \rhs of \refeq{upardel_prop-c} is estimated as follows,
%
\begin{align}
\label{eq:upardel_prop-d}
\normone{\upardeldel - \ualpauxst}
\le
\normone{\upardeldel - \ubar} + \normone{\ualpauxst - \ubar}.
\end{align}
We now proceed with the estimation of the \rhs of \refeq{upardel_prop-d}
by distinguishing our different smoothness assumptions.
%%% and we make constant use of
%%Lemmas~\ref{th:auxel_2} for this.
%%%
%
\begin{myenumerate}
\item
For $ \ust \in \overline{\R(G)} $ (no explicit smoothness),
from
estimate \refeq{upardel_prop-d}, Lemma \ref{th:auxel_2} and Corollary \ref{th:upardel_cor_b}
we obtain
\begin{align*}
\normone{\upardeldel - \ualpauxst}
\le
%%%\normone{\upardeldel - \ubar} + \normone{\ualpauxst - \ubar}
%%%=
\landauno{\delta^{-\lfrac{1}{a}}}
+ \landauno{\delta^{-\lfrac{1}{a}}}
=
\landauno{\delta^{-\lfrac{1}{a}}},
\end{align*}
%%%Introducing $ f_6(\para):= \max\{ f_4(\para), f_5(\para) \} $
%%and $ K_3 := \max\{K_2, 1\}, \ K_1 = c_3 K_3 $, we obtain
%
and estimate \refeq{upardel_prop-c}
%%% and \refeq{upardel_prop-d}
%%%as well as Lemma~\ref{th:auxel_2}
then gives
\begin{align*}
\normix{\upardelst -\ualpauxst}
 \le c_2 \delta^{\tfrac{1}{a+1}}
\landauno{\delta^{-\tfrac{1}{a+1}}}
%%= \landauno{1}
\to 0 \as \delta \to 0.
%%%& \le c_3 \left(f_6(\alpha)\,\para^{\mykap a}+K_3\delta \right)^{1/(a+1)}\,\left(\para^{-\lfrac{1}{\myr}}\left(f_6(\para)\, \para^{\mykap a}+ K_3\delta\right) \right)^{a/(a+1)}.
\end{align*}
This result in combination with estimate \refeq{upardel_prop-a}
and Lemma~\ref{th:auxel_2}
yields
$
\normix{\upardeldel -\ust} \to 0 $ as $ \delta \to 0 $. This is the first statement of
Theorem \ref{th:discrepancy_main_result}.
%
%%\begin{align}
%%\normix{\upardeldel -\ust}\le
%%\normix{\upardeldel -\ualpauxst}+\normix{\ualpauxst-\ust},
%%%=\normix{\upardel -\ualpaux}+g_1(\para^\mykap),
%%\label{eq:upardel_prop-a}
%%%\end{align}
%
\item (H\"older smoothness)
If $ \ust \in \ix_\pp $ for some $ 0 < \pp \le 1 $, then
from
estimate \refeq{upardel_prop-d}, Lemma \ref{th:auxel_2} and Corollary \ref{th:upardel_cor_b}
we obtain
\begin{align*}
\normone{\upardeldel - \ualpauxst}
\le  \Landauno{\delta^{-\frac{1-\pp}{\pp+\mya}}} +
\Landauno{\delta^{-\frac{1-\pp}{\pp+\mya}}}
=
\Landauno{\delta^{-\frac{1-\pp}{\pp+\mya}}},
\end{align*}
%%%as $ \delta \to 0 $.
and estimate \refeq{upardel_prop-c}
%%% and \refeq{upardel_prop-d}
%%%as well as Lemma~\ref{th:auxel_2}
then gives
\begin{align*}
\normix{\upardelst -\ualpauxst}
 \le
c_2 \delta^{\frac{1}{a+1}} \Landauno{\delta^{-\frac{1-\pp}{\pp+\mya}\frac{\mya}{\mya+1}}}
= \Landauno{\delta^{\frac{\pp}{\pp+\mya}}}
%%= \landauno{1}
\as \delta \to 0.
%%%& \le c_3 \left(f_6(\alpha)\,\para^{\mykap a}+K_3\delta \right)^{1/(a+1)}\,\left(\para^{-\lfrac{1}{\myr}}\left(f_6(\para)\, \para^{\mykap a}+ K_3\delta\right) \right)^{a/(a+1)}.
\end{align*}
This estimate combined with estimate \refeq{upardel_prop-a}
and Lemma~\ref{th:auxel_2}
yields
$
\normix{\upardeldel -\ust}
= \Landauno{\delta^{\frac{\pp}{\pp+\mya}}}
$ as $ \delta \to 0 $. This is the second statement of
Theorem \ref{th:discrepancy_main_result}.

\item (Low order smoothness)
If $ \ust \in \domain(\log \G) $, then
from
estimate \refeq{upardel_prop-d}, Lemma \ref{th:auxel_2} and Corollary \ref{th:upardel_cor_b}
we obtain
\begin{align*}
\normone{\upardeldel - \ualpauxst}
= \Landauno{\delta^{-\frac{1}{\mya}} \loginv[(1+\frac{1}{a})]{\tfrac{1}{\delta}}},
\end{align*}
%%%as $ \delta \to 0 $.
and estimate \refeq{upardel_prop-c}
%%% and \refeq{upardel_prop-d}
%%%as well as Lemma~\ref{th:auxel_2}
then gives
\begin{align*}
\normix{\upardelst -\ualpauxst}
 \le
c_2 \delta^{\frac{1}{\mya+1}}
\Landauno{\delta^{-\frac{1}{\mya+1}} \loginv{\tfrac{1}{\delta}}}
%%\loginv[(1+\frac{1}{a})]{\tfrac{1}{\delta}}}
=
\Landauno{\loginv{\tfrac{1}{\delta}}} \as \delta \to 0.
\end{align*}
This estimate in combination with  \refeq{upardel_prop-a}
and Lemma~\ref{th:auxel_2}
yields
$
\normix{\upardeldel -\ust}
= \Landauno{\loginv{\tfrac{1}{\delta}}} $ as $ \delta \to 0 $. This is the third and final statement of
Theorem \ref{th:discrepancy_main_result}.
\end{myenumerate}
%%%From the latter estimate and \refeq{upardel_prop-a}, the theorem now immediately follows.
%
\end{proof}

\section*{Acknowledgment}
This paper was created as part of the authors' joint DFG-Project No.~453804957 supported by the German Research Foundation under grants PL 182/8-1 (Chantal Klinkhammer, Robert Plato) and  HO 1454/13-1 (Bernd Hofmann).
\bibliography{HKP}

\end{document}